\providecommand{\U}[1]{\protect\rule{.1in}{.1in}}
\newtheorem{theorem}{Theorem}
\theoremstyle{plain}
\newtheorem{acknowledgement}{Acknowledgement}
\newtheorem{definition}{Definition}
\newtheorem{lemma}{Lemma}
\numberwithin{equation}{section}
\def\s\varphi{\mathcal{\varphi}}
\begin{document}
\title[Compactness of  commutator of multilinear singular integral
operator]{Compactness of  commutators of multilinear singular integral
operators with non-smooth kernels}
\author{Rui Bu}
\address[A. One and A. Two]{Author One: Department of Mathematics, Zhejiang University, Hangzhou
310027, P.R. China}
\email[A. One]{burui0@163.com}
\author{Jiecheng Chen}
\curraddr[A. Two]{Department of Mathematics, Zhejiang Normal University, Jinhua
321004, P.R. China}
\email[A.~Two]{jcchen@zjnu.edu.cn}
\thanks{}
\date{}
\subjclass[2000]{Primary 42B20, 47B07; Secondary 42B25.}
\keywords{ singular integral operator,  maximal function,
  weighted norm inequality, commutator, compact operator.
}
\dedicatory{}
\begin{abstract}
  In this paper,  the behavior  for commutators of a class of bilinear singular integral operator associated  with non-smooth kernels on the  products of weighted Lebesgue spaces is considered. By some new maximal functions to control the commutators of bilinear singular integral operators and ${\rm CMO}$ functions,  compactness of the commutators is proved.
\end{abstract}
\maketitle

\section{Introduction}
In recent decades, the study of multilinear analysis becomes an active topic
in harmonic analysis. The first important work, among several pioneer papers,
is the famous work by Coifman and Meyer in  \cite{Coifman1}, \cite{Coifman2},
where they established a bilinear multiplier theorem on the Lebesgue spaces.
Note that a multilinear multiplier actually is a convolution operator.
Naturally one will study the non-convolution operator
\begin{equation}
T(f_{1},\ldots,f_{m})(x)=\int_{(\mathbf{R}^n)^{m}}K(x,y_{1},\ldots
,y_{m})f_{1}(y_{1})\cdots f_{m}(y_{m}){\rm d}y_{1}\cdots {\rm d} y_{m},
\end{equation}
where  $K(x,y_{1},\ldots,y_{m})$ is a locally integral function defined away
from the diagonal $x=y_{1}=\cdots=y_{m}$ in $(\mathbf{R}^n)^{m+1}$,
$x\notin\cap_{j=1}^{m} {\rm supp}\,f_{j}$ \ and  $f_{1},\ldots,f_{m}$ \ are bounded functions with compact supports. Precisely,  $T$: $\mathcal{S}%
(\mathbf{R}^n)\times\cdots\times\mathcal{S}(\mathbf{R}^n)\mapsto
\mathcal{S}^{^{\prime}}(\mathbf{R}^n)$  is an m-linear operator associated
with the kernel $K(x,y_{1},\ldots,y_{m})$. \ If there exist positive constants
$A$ and $\gamma\in(0,1]$ such that $K$ satisfies the size condition
\begin{equation}
|K(x,y_{1},\ldots,y_{m})|\leq\frac{A}{(|x-y_{1}|+\cdots+|x-y_{m}|)^{mn}}%
\end{equation}
for all $(x,y_{1},\ldots,y_{m})\in({\mathbf{R}^n})^{m+1}$ with $x\neq y_{j}$
\ for some $j\in\{1,2,\ldots,m\}$; and the smoothness condition
\begin{eqnarray}
&& |K(x,y_1,\ldots,y_j,\ldots,y_m)-K(x',y_1,\ldots,y_j,\ldots,y_m)|\\
  && \le \frac{A|x-x'|^{\gamma}}{(\sum^m_{i=1}|x-y_i|)^{mn+
  \gamma }},\notag
\end{eqnarray}
whenever $|x-x^{^{\prime}}|\leq\frac{1}{2}\max_{1\leq j\leq m}|x-y_{j}|$ and
also for each $j$,
 \begin{eqnarray}
 && |K(x,y_1,\ldots,y_j,\ldots,y_m)-K(x,y_1,\ldots,y^{'}_j,\ldots,y_m)|\\
 &&  \le \frac{A|y_j-y^{'}_j|^{\gamma}}{(\sum^m_{i=1}|x-y_i|)^{mn+
  \gamma}},\notag
\end{eqnarray}
whenever $|y_{j}-y_{j}^{^{\prime}}|\leq\frac{1}{2}\max_{1\leq j\leq m}%
|x-y_{j}|$, then we call that \ $K$ \ is a Calder\'{o}n-Zygmund kernel and
denote it by  $K\in m-CZK(A,\gamma).$ Also, $T$ is called the multilinear
Calder\'{o}n-Zygmund operator associated with the kernel \ $K.$ In
\cite{Grafakos5}, Grafakos and Torres established the multilinear $T1$
theorem, so that they obtained the strong type boundedness on products of
$L^{p}$ spaces and endpoint weak type estimates of operators $T$ associated
with kernels $K\in m-CZK(A,\gamma)$. Furthermore, the \ $A_{p}$ weights (see
Definition 1.2) on the operator $T$ and on the corresponding maximal operator
were considered in \cite{Grafakos4}. After then, the study of multilinear
Calder\'{o}n-Zygmund operator is fruitful. The reader can refer to
 \cite{Grafakos3}, \cite{Grafakos4}, \cite{Grafakos5}, \cite{Lerner}, \cite{Perez1}, \cite{Perez2},
\cite{Perez3} and the references therein.

However, there are some multilinear singular integral operators, including the
Calder\'{o}n commutator, whose kernels do not satisfy (1.4) (see
\cite{Duong2}). Here, the Calder\'{o}n commutator is defined by
\begin{equation}
\mathcal{C}_{m+1}(f,a_{1},\ldots,a_{m})(x)=\int_{\mathbf{R}}\frac{\prod
_{j=1}^{m}(A_{j}(x)-A_{j}(y))}{(x-y)^{m+1}}f(y){\rm d}y,
\end{equation}
where $A_{j}^{^{\prime}}=a_{j}$. \ In  \cite{Duong2}, the authors introduced a
class of multilinear singular integral operators whose kernels satisfy
``smoothness conditions'' weaker than those of the multilinear
Calder\'{o}n-Zygmund kernels, via the generalized approximation to the
identity. They first established a weak type estimate,  for $p_{1}%
,\ldots,p_{m+1}\in\lbrack1,\infty]\ \ $and$\ \ p\in(0,\infty)$ with $\frac
{1}{p}=\sum_{j=1}^{m+1}\frac{1}{p_{j}}$,
\[
\Vert\mathcal{C}_{m+1}(f,a_{1},\ldots,a_{m})\Vert_{L^{p,\infty}(\mathbf{R}%
)}\leq C\Vert f\Vert_{L^{p_{m+1}}(\mathbf{R})}\prod_{j=1}^{m}\Vert a_{j}%
\Vert_{L^{p_{k}}(\mathbf{R})}.
\]
If $\min_{1\leq j\leq m+1}p_{j}>1$, then the strong type estimate was also
established. The weighted estimates, including the multiple weights, of the
maximal Calder\'{o}n commutator were considered in \cite{Duong1} and
\cite{Grafakos3}. Moreover, there are a large amount of work related to
singular integral operators with non-smooth kernels. The reader may refer
\cite{Hu3}, \cite{Hu2} and \cite{Duong3}, among many interesting works.

In this article, we are interested in the compactness of the commutator of
multilinear singular integral operators with non-smooth kernels and
$\mathrm{CMO}$ functions, where $\mathrm{CMO}$ denotes the closure of
$C_{c}^{\infty}$ in the $\mathrm{BMO}$ topology. For the sake of convenience,
we will write out the case of compactness in a bilinear setting. In
particular, We will study the compactness of \ $T,$ where we assume that
\ $T$  is a bilinear singular integral operator associated with kernel $K$ in
the sense (1.1) and satisfying (1.2), and
\begin{itemize}
\item[\rm (i)] $T$ is bounded from
\begin{equation}
 L^{1}(\mathbf{R}^n)\times L^{1}(\mathbf{R}^n) \to L^{1/2,\infty
}(\mathbf{R}^n),
\end{equation}
\item[\rm (ii)] for $x,x^{^{\prime}},y_{1},y_{2}\in\mathbf{R}^n$ with $8|x-x^{^{\prime
}}|<\min_{1\leq j\leq2}|x-y_{j}|$,
\begin{equation}
|K(x,y_{1},y_{2})-K(x^{^{\prime}},y_{1},y_{2})|\leq\frac{D\tau^{\gamma}%
}{(|x-y_{1}|+|x-y_{2}|)^{2n+\gamma}},
\end{equation}
\end{itemize}
where \ $D$ is a constant and $\tau$ is a number such that $2|x-x^{^{\prime}%
}|<\tau$ and $4\tau<\min_{1\leq j\leq2}|x-y_{j}|$. \ It was pointed in
\cite{Hu4} that the above non-smooth kernel includes the non-smooth kernel
introduced by Doung et al. in \cite{Duong1}, \cite{Duong2}.  For
$b\in\mathrm{BMO}(\mathbf{R}^n)$, we consider commutators
\begin{align*}
&  T_{b}^{1}(f_{1},f_{2})=[b,T]_{1}(f_{1},f_{2})=T(bf_{1},f_{2})-bT(f_{1}%
,f_{2}),\\
&  T_{b}^{2}(f_{1},f_{2})=[b,T]_{2}(f_{1},f_{2})=T(f_{1},bf_{2})-bT(f_{1}%
,f_{2}).
\end{align*}
For $\vec{b}=(b_{1},b_{2})\in\mathrm{BMO}(\mathbf{R}^n)\times\mathrm{BMO}(\mathbf{R}^n)$, we consider the
iterated commutator
\[
T_{\vec{b}}(f_{1},f_{2})=[b_{2},[b_{1},T]_{1}]_{2}(f_{1},f_{2}),
\]
and, in the sense of (1.1),
\begin{align*}
&  [b,T]_{1}(f_{1},f_{2})(x)=\int_{\mathbf{R}^n}\int_{\mathbf{R}^n%
}K(x,y_{1},y_{2})(b(y_{1})-b(x))f_{1}(y_{1})f_{2}(y_{2}){\rm d}y_{1}{\rm d} y_{2},\\
&  [b,T]_{2}(f_{1},f_{2})(x)=\int_{\mathbf{R}^n}\int_{\mathbf{R}^n%
}K(x,y_{1},y_{2})(b(y_{2})-b(x))f_{1}(y_{1})f_{2}(y_{2}){\rm d} y_{1}{\rm d} y_{2},\\
&  T_{\vec{b}}(f_{1},f_{2})(x)=\int_{\mathbf{R}^n}\int_{\mathbf{R}^n%
}K(x,y_{1},y_{2})(b_{1}(y_{1})-b_{1}(x))(b_{2}(y_{2})-b_{2}(x))f_{1}%
(y_{1})f_{2}(y_{2}){\rm d} \vec{y}.
\end{align*}

Our aim is to obtain the compactness of above commutators. Before stating our
results, we briefly describe the background and our motivation. In
\cite{Calderon}, Calder\'{o}n first proposed the concept of compactness in the
multilinear setting and B\'{e}nyi and Torres put forward an equivalent one in
\cite{Benyi3}. B\'{e}nyi and Torres extended the result of compactness for
linear singular integrals by Uchiyama \cite{Uchiyama} to the bilinear setting
and obtained that  $[b,T]_{1}$, $[b,T]_{2}$, $[b_{2},[b_{1},T]_{1}]_{2}$ are
compact bilinear operators from $L^{p_{1}}(\mathbf{R}^n)\times L^{p_{2}%
}(\mathbf{R}^n)$ to $L^{p}(\mathbf{R}^n)$ when $b,b_{1},b_{2}%
\in\mathrm{CMO}(\mathbf{R}^n)$, $1<p_{1},p_{2}<\infty$ and $1/{p_{1}}+1/{p_{2}}=1/p\leq1$.
Recently, Clop and Cruz \cite{Clop} considered the compactness of the linear
commutator on weighted spaces. For the bilinear case, B\'{e}nyi et al.
\cite{Benyi1} extended the result of \cite{Benyi3} to the weighted case, and
they obtained that all  $[b,T]_{1}$, $[b,T]_{2}$, $[b_{2},[b_{1},T]_{1}]_{2}$
are compact operators from $L^{p_{1}}(w_{1})\times L^{p_{2}}(w_{2})$ to
$L^{p}(\nu_{\vec{w}})$ when $1<p_{1},p_{2}<\infty$, $1/{p_{1}}+1/{p_{2}%
}=1/p<1$, $\vec{w}\in A_{p}(\mathbf{R}^n)\times A_{p}(\mathbf{R}^n)$ and $b,b_{1},b_{2}\in\mathrm{CMO}(\mathbf{R}^n)$.
We note that in \cite{Benyi1}, $T$ is a Calder\'{o}n-Zygmund operator with
smooth kernel. Hence, in this article, we will consider the same compactness
for these commutators by assuming \ $T$ \ is an operator associated with
non-smooth kernel. Although we will adopt the concept of compactness proposed
in \cite{Benyi3} (The reader can refer to \cite{Benyi3} and \cite{Yosida} for more properties of
compact and precompact) and some basic ideas used in \cite{Benyi3}, \cite{Chen1}, \cite{Chen2},  \cite{Hu4},
\cite{Lerner} and \cite{Perez3}, our proof meet some special
difficulties so that some new ideas and estimates must be bought in.
Particularly, some specific maximal functions will be involved.

We denote the closed ball of radius $r$ centered at the origin in the normed
space $X$ as $B_{r,X}=\{x\in X:\Vert x\Vert\leq r\}$.

\begin{definition}
A bilinear operator $T: X\times Y\mapsto Z$ is called compact if
$T(B_{1,X}\times B_{1,Y})$ is precompact in $Z$.
\end{definition}

\begin{definition}
A weight $w$ belongs to the class $A_{p}$, $1<p<\infty,$ if
\[
\sup_{Q}\bigg(\frac{1}{|Q|}\int_{Q}w(y){\rm d}y\bigg)\bigg(\frac{1}{|Q|}\int_{Q}w(y)^{1-p^{^{\prime
}}}{\rm d} y\bigg)^{p-1}<\infty.
\]
A weight $w$ belongs to the class $A_{1}$ if there is a constant $C$ such
that
\[
\frac{1}{|Q|}\int_{Q}w(y){\rm d}y\leq C\inf_{x\in Q}w(x).
\]

\end{definition}

\begin{definition}
Let $\vec{p}=(p_{1},p_{2})$ and $1/p=1/{p_{1}}+1/{p_{2}}$ with $1\leq
p_{1},p_{2}<\infty$. Given $\vec{w}=(w_{1},w_{2})$, set $\nu_{\vec{w}}%
=\prod_{j=1}^{2}w_{j}^{p/p_{j}}$. We say that $\vec{w}$ satisfies the
$A_{\vec{p}}$ condition if
\[
\sup_{Q}\bigg(\frac{1}{|Q|}\int_{Q}\nu_{\vec{w}}\bigg)^{1/p}\prod_{j=1}^{2}\bigg(\frac
{1}{|Q|}\int_{Q}w_{j}^{1-p_{j}^{^{\prime}}}\bigg)^{1/p_{j}^{^{\prime}}}<\infty.
\]
Here,  $\Big(\frac{1}{|Q|}\int_{Q}w_{j}^{1-p_{j}^{^{\prime}}}\Big)^{1/p_{j}^{^{\prime
}}}$ is understood as $(\inf_{Q}w_{j})^{-1},$when $p_{j}=1$.
\end{definition}

The following two theorems are our main results:

\begin{theorem}\label{T1}
Let $T$ be a bilinear operator satisfying condition (1.6) and its kernel $K$
satisfy (1.2), (1.7). Assume  $b\in\mathrm{CMO}(\mathbf{R}^n)$, $p_{1}%
,p_{2}\in(1,\infty)$, $p\in(1,\infty)$ such that $1/p=1/p_{1}+1/p_{2}$ and
$\vec{w}=(w_{1},w_{2})\in A_{\vec{p}}(\mathbf{R}^n)$ such that $\nu
_{\vec{w}}\in A_{p}(\mathbf{R}^n)$. Then $[b,T]_{1}$, $[b,T]_{2}$ are
compact from $L^{p_{1}}(\mathbf{R}^n,w_{1})\times L^{p_{2}}(\mathbf{R}^n,w_{2})$ to $L^{p}(\mathbf{R}^n,\nu_{\vec{w}})$.
\end{theorem}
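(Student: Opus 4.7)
The plan is to follow the template of B\'enyi--Torres \cite{Benyi3} and its weighted extension by B\'enyi et al.\ \cite{Benyi1}, adapting each step to the weaker kernel hypothesis (1.7). The starting point is that under the hypotheses on $T$ and $\vec{w}$ the commutators $[b,T]_i$ are bounded from $L^{p_1}(\mathbf{R}^n,w_1)\times L^{p_2}(\mathbf{R}^n,w_2)$ into $L^{p}(\mathbf{R}^n,\nu_{\vec{w}})$ with operator norm controlled by $\|b\|_{\mathrm{BMO}}$; this weighted boundedness is extracted by adapting the multilinear commutator machinery of \cite{Lerner} and \cite{Hu4} to the kernel class (1.7). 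By bilinearity in $b$, $[b,T]_i-[b_\eta,T]_i=[b-b_\eta,T]_i$ has operator norm vanishing whenever $b_\eta\to b$ in BMO; since $b\in\mathrm{CMO}$ one may choose $b_\eta\in C_c^\infty$, so $[b,T]_i$ is the operator-norm limit of $[b_\eta,T]_i$. As a norm-limit of compact operators is compact, it suffices to prove the theorem under the extra assumption $b\in C_c^\infty(\mathbf{R}^n)$.

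For such $b$, I will verify the three conditions of the Fr\'echet--Kolmogorov--Riesz precompactness criterion, adapted to the weighted target $L^{p}(\nu_{\vec{w}})$, on
\[
\mathcal{F}=\bigl\{[b,T]_i(f_1,f_2):\|f_j\|_{L^{p_j}(w_j)}\leq 1,\ j=1,2\bigr\},
\]
namely (a) uniform boundedness in $L^{p}(\nu_{\vec{w}})$; (b) uniform tail decay $\|g\chi_{\{|x|>R\}}\|_{L^{p}(\nu_{\vec{w}})}\to 0$ as $R\to\infty$; and (c) uniform equicontinuity $\|g(\cdot+h)-g\|_{L^{p}(\nu_{\vec{w}})}\to 0$ as $|h|\to 0$, the suprema being taken over $g\in\mathcal{F}$. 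Item (a) is the boundedness recalled above.

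Item (b) is the softer of the remaining two: if $|x|$ exceeds twice the diameter of $\mathrm{supp}\,b$ then $b(x)=0$, so the factor $b(y_i)-b(x)$ in the integral confines $y_i$ to a fixed compact set. The size bound (1.2) then yields $|[b,T]_i(f_1,f_2)(x)|\lesssim |x|^{-2n}\,\|b\|_\infty\,\int_{\mathrm{supp}\,b}|f_i|\cdot\int|f_j|$ for $|x|$ large, and a H\"older inequality combined with $\nu_{\vec{w}}\in A_p$ delivers the required uniform tail estimate.

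The heart of the argument, and the main obstacle, is (c). I will split the difference $[b,T]_i(f_1,f_2)(x+h)-[b,T]_i(f_1,f_2)(x)$ into a local part supported where $|x-y_1|+|x-y_2|\lesssim |h|^{\theta}$ for some $\theta\in(0,1)$ and a far part. The local part is handled via the Lipschitz regularity of the smooth $b$ and the weak endpoint estimate (1.6) after a suitable truncation, yielding smallness as $|h|\to 0$. The far part is governed by the kernel regularity (1.7); because this condition only provides H\"older-type control of $K$ in $x$ through an intermediate parameter $\tau$ with $2|h|<\tau<\tfrac14\min_j|x-y_j|$, the far region must be decomposed dyadically in $\tau$ and the resulting geometric series summed. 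To close the estimate uniformly over $\mathcal{F}$, I will introduce new bilinear maximal functions $\mathcal{M}_{|h|,T,b}(f_1,f_2)$ adapted to the scale $|h|$ and to $b$, prove their boundedness from $L^{p_1}(w_1)\times L^{p_2}(w_2)$ into $L^{p}(\nu_{\vec{w}})$ by Lerner-type sparse/Cotlar arguments, and establish a pointwise domination of the form
\[
|[b,T]_i(f_1,f_2)(x+h)-[b,T]_i(f_1,f_2)(x)|\lesssim \omega(|h|)\,\mathcal{M}_{|h|,T,b}(f_1,f_2)(x),
\]
with $\omega(|h|)\to 0$. By the symmetry between the two input slots the treatment of $[b,T]_2$ is identical to that of $[b,T]_1$ up to relabeling.
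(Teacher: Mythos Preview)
Your overall architecture matches the paper's: reduce to $b\in C_c^\infty$ via the BMO-norm control of the commutator, then verify the three weighted Fr\'echet--Kolmogorov conditions on the image of the unit ball. Two points, however, diverge from the paper in ways that are either incorrect or insufficiently specified.

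\textbf{The tail estimate (b) has a genuine gap.} Your pointwise bound
\[
|[b,T]_i(f_1,f_2)(x)|\lesssim |x|^{-2n}\|b\|_\infty\int_{\mathrm{supp}\,b}|f_i|\cdot\int_{\mathbf{R}^n}|f_j|
\]
is not valid: only the variable $y_i$ is confined to $\mathrm{supp}\,b$, while $y_j$ ranges over all of $\mathbf{R}^n$, and $\int_{\mathbf{R}^n}|f_j|$ is in general infinite for $f_j\in L^{p_j}(w_j)$. The size condition (1.2) does \emph{not} give $|x|^{-2n}$ uniformly, since $|x-y_j|$ may be much larger than $|x|$. The paper avoids this by decomposing the $y_j$-integration into dyadic annuli $\{2^{l-1}|x|\le|y_j|\le 2^l|x|\}$, bounding each shell via H\"older and the dual-weight integrals $\int w_j^{1-p_j'}$, and then using the $A_\infty$ property of $w_1^{1-p_1'}$ to extract a geometric decay in both $l$ and the outer dyadic index $j$. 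Without this annular decomposition your step (b) does not go through.

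\textbf{The equicontinuity step (c) omits the paper's key device and overstates what is achievable pointwise.} The paper does not attack (c) for $[b,T]_i$ directly; it first passes to a smoothly truncated operator $T^\delta$ whose kernel $K^\delta=\psi^\delta K$ vanishes near the diagonal, shows $\|[b,T^\delta]_i-[b,T]_i\|\to 0$ as $\delta\to 0$ (Lemma~6), and then verifies (c) for $[b,T^\delta]_i$ with $\delta$ \emph{fixed}. The resulting four-term splitting $A+B+C+D$ produces bounds $C|t|(1+1/\eta)$, $C|t|^\gamma/\eta^\gamma$, $C\eta$, $C(|t|+\eta)$ involving an auxiliary scale $\eta$, and only after choosing $\eta=|t|/\xi$ do these combine to $O(\xi)$. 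In particular there is no single pointwise domination of the form $\omega(|h|)\cdot\mathcal{M}(f_1,f_2)(x)$ with a maximal operator bounded \emph{uniformly in $h$}; the pieces trade off against one another and must be balanced. The maximal operators that actually appear are not new but are the already-available $T^{*}$, $\mathcal{M}$, $\mathcal{M}_{2,i}$ and $\mathcal{M}_\beta^i$ of Lemmas~2, 3 and~7. Your proposed single-scale $|h|^\theta$ splitting with a hypothetical $\mathcal{M}_{|h|,T,b}$ would need both a definition and a proof of $h$-uniform boundedness, and you should expect to recover precisely the two-parameter $(|t|,\eta)$ optimization rather than a clean multiplicative $\omega(|h|)$.
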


In order to prove Theorem \ref{T1}, we need the following result which has
independent interest.

\begin{theorem}\label{T2}
Let $T$ be a bilinear operator satisfying condition (1.6) and its kernel $K$
satisfy (1.2), (1.7). Assume  $b\in\mathrm{BMO}(\mathbf{R}^n)$, $p_{1}%
,p_{2}\in(1,\infty)$, $p\in(0,\infty)$ such that $1/p=1/p_{1}+1/p_{2}$,
$\vec{w}=(w_{1},w_{2})\in A_{\vec{p}}(\mathbf{R}^n)$. Then
\[
\Vert\lbrack b,T]_{1}(f_{1},f_{2})\Vert_{L^{p}(\nu_{\vec{w}})},\Vert\lbrack
b,T]_{2}(f_1,f_2)\Vert_{L^{p}(\nu_{\vec{w}})}\leq C\Vert b\Vert_{\mathrm{BMO}(\mathbf{R}^n)%
}\Vert f_{1}\Vert_{L^{p_{1}}(w_{1})}\Vert f_{2}\Vert_{L^{p_{2}}(w_{2})}.
\]

\end{theorem}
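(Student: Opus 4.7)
My proof plan is a weighted Fefferman--Stein argument: I reduce Theorem~\ref{T2} to a pointwise bound on a sharp maximal function of the commutator by expressions involving $T$ itself and various bilinear maximal functions that are already known to be bounded on $L^{p}(\nu_{\vec w})$ under the $A_{\vec p}$ hypothesis. Since $\vec w\in A_{\vec p}$ implies $\nu_{\vec w}\in A_\infty$, the inequality $\|g\|_{L^{p}(\nu_{\vec w})}\le C\|M^{\sharp}_{\delta}g\|_{L^{p}(\nu_{\vec w})}$ holds for $0<\delta<\min(1/2,p)$, provided the left side is a priori finite; this finiteness follows from the weighted multilinear boundedness of the bare operator $T$ established in \cite{Hu4} under exactly our hypotheses.

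The key pointwise inequality I aim for is
\[
M^{\sharp}_{\delta}\bigl([b,T]_{1}(f_1,f_2)\bigr)(x)\le C\|b\|_{\mathrm{BMO}}\Bigl(M_{\varepsilon}\bigl(T(f_1,f_2)\bigr)(x)+\mathcal{M}(f_1,f_2)(x)+\widetilde{\mathcal{M}}(f_1,f_2)(x)\Bigr),
\]
where $\mathcal{M}$ is the bilinear Hardy--Littlewood maximal function, $\widetilde{\mathcal{M}}$ is its $L\log L$-variant, and $0<\delta<\varepsilon<1/2$. To prove it I fix a cube $Q$ containing $x$, decompose $f_i=f_i^{0}+f_i^{\infty}$ with $f_i^{0}=f_i\chi_{8Q}$, and use the identity
\[
[b,T]_{1}(f_1,f_2)(z)=(b(z)-b_Q)T(f_1,f_2)(z)-T\bigl((b-b_Q)f_1,f_2\bigr)(z)
\]
to rewrite $[b,T]_{1}(f_1,f_2)(z)-c$ as the $(b(z)-b_Q)T(f_1,f_2)(z)$ contribution (handled by John--Nirenberg and H\"older, yielding the $M_{\varepsilon}(T(f_1,f_2))$ piece) plus four $T$-terms indexed by $(\alpha_1,\alpha_2)\in\{0,\infty\}^{2}$. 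The three `local' terms, where at least one of $f_1^{0},f_2^{0}$ appears, are controlled by Kolmogorov's inequality combined with the weak-type $L^{1}\times L^{1}\to L^{1/2,\infty}$ bound (1.6); the factor $|b-b_Q|$ is absorbed via the BMO--$\exp L$ duality $\|b-b_Q\|_{\exp L,Q}\lesssim\|b\|_{\mathrm{BMO}}$, which is precisely why the $L\log L$ maximal function $\widetilde{\mathcal{M}}$ must be brought into the estimate.

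The main obstacle is the fully global term $T\bigl((b-b_Q)f_1^{\infty},f_2^{\infty}\bigr)$. For this I set $c:=T\bigl((b-b_Q)f_1^{\infty},f_2^{\infty}\bigr)(x_0)$ with $x_0$ the center of $Q$, and bound the oscillation on $Q$ by splitting the integral in $(y_1,y_2)$ into annular shells $(2^{k}\cdot 8Q)\setminus(2^{k-1}\cdot 8Q)$ and invoking the kernel regularity (1.7) in the \emph{first} argument of $K$, which is the only smoothness available. Each shell contributes a factor of order $\tau^{\gamma}/(|x_0-y_1|+|x_0-y_2|)^{2n+\gamma}$ with $\tau\simeq\mathrm{diam}(Q)$, and after applying the standard bound $\frac{1}{|2^{k}Q|}\int_{2^{k}Q}|b-b_Q|\lesssim k\|b\|_{\mathrm{BMO}}$ the series in $k$ sums geometrically to $C\|b\|_{\mathrm{BMO}}\mathcal{M}(f_1,f_2)(x)$. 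The delicate point throughout is that (1.7) controls only $x$-smoothness, so every move in the annular decomposition must keep $y_1,y_2$ fixed; this is what forces the use of the $L\log L$ maximal function in the local terms. The case of $[b,T]_{2}$ is completely symmetric. Once the pointwise inequality is in hand, the known $A_{\vec p}$-weighted bounds for $T$ (from \cite{Hu4}), $\mathcal{M}$, and $\widetilde{\mathcal{M}}$ from $L^{p_1}(w_1)\times L^{p_2}(w_2)$ to $L^{p}(\nu_{\vec w})$ close the argument via Fefferman--Stein.
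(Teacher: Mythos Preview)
Your overall architecture (Fefferman--Stein reduction, the identity $[b,T]_1=(b-b_Q)T-T((b-b_Q)f_1,f_2)$, John--Nirenberg for the first piece, Kolmogorov for the purely local piece, kernel regularity (1.7) for the purely global piece) matches the paper's. The gap is in the two \emph{mixed} terms $T((b-b_Q)f_1^{0},f_2^{\infty})$ and $T((b-b_Q)f_1^{\infty},f_2^{0})$, which you propose to absorb via Kolmogorov and the weak $(1,1,1/2)$ bound. That does not work: Kolmogorov on $Q$ produces $|Q|^{-2}\|(b-b_Q)f_1^{0}\|_{1}\|f_2^{\infty}\|_{1}$, and $\|f_2^{\infty}\|_{1}=\int_{\mathbf R^{n}\setminus 8Q}|f_2|$ is a global quantity with no control by any maximal function of $f_2$ at $x$. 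In the classical smooth theory one subtracts a constant here too and uses $y$-regularity of $K$, but in the present setting only $x$-regularity (1.7) is available, and (1.7) needs $\min_j|x-y_j|$ large, which fails when one variable is local.

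The paper handles the mixed terms by subtracting constants $c_1,c_2$ and then using only the \emph{size} bound (1.2) (not (1.7)): since $y_1\in Q^{*}$ forces $|z-y_1|+|z-y_2|\sim|z-y_2|$, one obtains an annular sum of the form
\[
\|f_1\|_{L(\log L),Q^{*}}\sum_{k\ge1}2^{-kn}\,\frac{1}{|2^{k}Q^{*}|}\int_{2^{k}Q^{*}}|f_2|,
\]
and analogously with an extra factor $k$ for the other mixed term. These are \emph{off-diagonal} bilinear averages (the two functions live on different cubes) and are not dominated by $\mathcal M$ or $\mathcal M_{L(\log L)}$. This is exactly why the paper brings in the auxiliary maximal operators $\mathcal M^{i}_{\beta}$ of Hu and proves their $A_{\vec p}$-weighted boundedness (Lemma~\ref{lemma3}); the pointwise estimate actually proved is
\[
M^{\sharp}_{\delta}(T^{1}_{b}\vec f)(x)\le C\|b\|_{\mathrm{BMO}}\Bigl(M_{\varepsilon}(T\vec f)(x)+\mathcal M_{L(\log L)}(\vec f)(x)+\sum_{i=1}^{2}\bigl(\mathcal M^{i}_{\beta}(f_1^{r},f_2^{r})(x)\bigr)^{1/r}\Bigr),
\]
and one then chooses $r>1$ close to $1$ so that $\vec w\in A_{\vec p/r}$ (openness of $A_{\vec p}$). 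Your proposed pointwise inequality, with only $M_{\varepsilon}(T\vec f)$, $\mathcal M$, and $\widetilde{\mathcal M}$ on the right, is therefore not what the argument yields, and the step ``mixed terms by Kolmogorov'' must be replaced by the size-condition computation leading to the $\mathcal M^{i}_{\beta}$ operators.
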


\noindent\textbf{Remark 1.1 } Theorem \ref{T1} and \ref{T2} are also true for the
iterated commutator $[b_{2},[b_{1},T]_{1}]_{2}$, and their proofs are similar
to the proof of Theorem \ref{T1} and \ref{T2}. We leave the detail to the interested reader.

We make some conventions. In this paper, we always denote a positive constant
by $C$ which is independent of the main parameters and its value may differ
from line to line. For a measurable set $E$, $\chi_{E}$ denotes its
characteristic function. For a fixed $p$ with $p\in\lbrack1,\infty)$,
$p^{^{\prime}}$ denotes the dual index of $p$. We also denote  $\vec{f}%
=(f_{1},\cdots,f_{m})$ \ with scalar functions \ $f_{j}~(j=1,2,...,m)$. \ Given
$\alpha>0$ and a cube $Q$, $\ell(Q)$ denotes the side length of $Q$, and
$\alpha Q$ denotes the cube which is the same center as $Q$ and $\ell(\alpha
Q)=\alpha\ell(Q)$.  $f_{Q}$ denotes the average of $\ f$ over $Q$.
Let $M$ be the standard Hardy-Littlewood maximal operator.  For $0< \delta <\infty $, $M_{\delta}$ is the maximal operator defined by
\begin{equation*}
 M_{\delta}f(x)=M(|f|^{\delta})^{1/\delta}(x)=\left(\sup_{Q\ni x}\frac{1}{|Q|} \int_Q |f(y)|^{\delta}{\rm d}y\right)^{1/\delta},
\end{equation*}
  $M^{\#}$  is the sharp maximal operator defined by Fefferman and Stein \cite{fefferman},
\begin{equation*}
M^{\#} f(x)=\sup_{Q\ni x} \inf_c \frac{1}{|Q|}\int_Q |f(y)-c|{\rm d}y\approx \sup_{Q\ni x}  \frac{1}{|Q|}\int_Q |f(y)-f_Q|{\rm d}y,
\end{equation*}
and
\[M^{\#}_{\delta}f(x)=M^{\#}(|f|^{\delta})^{1/\delta}(x).\]
It is known that, when  $0<p,\delta<\infty$, $w\in A_{\infty}(\mathbf{R}^n)$, there exists a  $C>0$ such that
\begin{equation}
 \int_{\mathbf{R}^n}(M_{\delta}f(x))^pw(x){\rm d}x \le C\int_{\mathbf{R}^n}(M^{\#}_{\delta}f(x))^p w(x){\rm d}x
\end{equation}
for any function $f$ for which the left-hand side is finite.

\section{A multilinear maximal operator}
We need some basis facts about  Orlicz spaces, for more information about these spaces the reader may consult \cite{Rao}.
For  $\Phi(t)=t(1+\log^+t)$  and a cube $Q$ in $\mathbf{R}^n$, we define
\begin{equation}
\|f\|_{L(\log L),Q}=\inf\{\lambda >0 :\frac{1}{|Q|}\int_Q \Phi\left(\frac{|f(x)|}{\lambda}\right) {\rm d} x\le 1\}.
\end{equation}
It is obvious that $\|f\|_{L(\log L),Q} >1$ if and only if $\frac{1}{|Q|}\int_Q \Phi(|f(x)|) {\rm d}x >1$. The generalized H\"older  inequality in Orlicz space together with the John-Nirenberg inequality imply that
\begin{equation}
\frac{1}{|Q|}\int_Q|b(y)-b_Q|f(y){\rm d}y \le C\|b\|_{{\rm BMO}(\mathbf{R}^n)}\|f\|_{L(\log L),Q}.
\end{equation}
 Define the maximal operator $\mathcal{M}_{L(\log L)}$ by
\begin{equation}
\mathcal{M}_{L(\log L)}(\vec{f})(x)=\sup_{Q\ni x}\prod^2_{j=1}\|f_j\|_{L(\log L),Q},
\end{equation}
 where the supremum is taken over all the cubes containing $x$. The following  boundedness  for $\mathcal{M}_{L(\log L)}(\vec{f})$ was proved in \cite{Lerner}.
 \begin{lemma}\label{lemma1}
If $1<p_1,p_2<\infty$, $\frac{1}{p}=\sum^2_{j=1} \frac{1}{p_j}$,  and $\vec{w}=(w_1,w_2)\in A_{\vec{p}}(\mathbf{R}^{2n})$,
 then  $\mathcal{M}_{L(\log L)}(\vec{f})$  is bounded from $L^{p_1}(w_1) \times L^{p_2}(w_2)$ to $L^{p}(\nu_{\vec{w}})$.
\end{lemma}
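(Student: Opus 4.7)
My plan is to dominate the Orlicz bilinear maximal operator $\mathcal{M}_{L(\log L)}$ pointwise by (a power of) the classical bilinear Hardy--Littlewood maximal function $\mathcal{M}(\vec f)(x)=\sup_{Q\ni x}\prod_{j=1}^2|Q|^{-1}\int_Q|f_j|$, and then invoke the known weighted theory for $\mathcal{M}$ (the main theorem of Lerner--Ombrosi--P\'erez--Torres--Trujillo-Gonz\'alez, which characterizes the boundedness of $\mathcal{M}$ on weighted $L^p$-products exactly by the condition $A_{\vec p}$). First I would observe that for any fixed $r>1$ the Young function $\Phi(t)=t(1+\log^+t)$ satisfies $\Phi(t)\le C_r(t+t^r)$, so that the Luxemburg-norm definition (2.1) immediately yields $\|f\|_{L(\log L),Q}\le C_r(|Q|^{-1}\int_Q|f|^r\,dy)^{1/r}$. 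Multiplying over $j=1,2$ and taking the supremum over cubes $Q\ni x$ gives the pointwise majorization
\[
\mathcal{M}_{L(\log L)}(\vec f)(x)\;\le\;C_r\,\mathcal{M}\big(|f_1|^r,|f_2|^r\big)(x)^{1/r}.
\]

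Next I would select an admissible $r$ using the self-improvement (``openness'') of the multilinear Muckenhoupt class: given $\vec w\in A_{\vec p}$, there exists $r\in(1,\min\{p_1,p_2\})$ such that $\vec w$ still lies in $A_{\vec p/r}$, where $\vec p/r=(p_1/r,p_2/r)$. The identities $1/(p/r)=1/(p_1/r)+1/(p_2/r)$ and $(p/r)/(p_j/r)=p/p_j$ guarantee that $\nu_{\vec w}$ is the correct aggregate weight for the rescaled exponents. I would then apply the cited $\mathcal{M}$-theorem with exponents $\vec p/r$ and weights $\vec w$ to obtain
\[
\big\|\mathcal{M}(|f_1|^r,|f_2|^r)\big\|_{L^{p/r}(\nu_{\vec w})}\;\le\;C\prod_{j=1}^2\big\||f_j|^r\big\|_{L^{p_j/r}(w_j)}\;=\;C\prod_{j=1}^2\|f_j\|_{L^{p_j}(w_j)}^{r}.
\]
Raising this to the power $1/r$ and combining with the pointwise bound above would finish the argument.

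The hard part will be justifying the openness property of $A_{\vec p}$ used in the second paragraph: the multilinear analogue of the familiar reverse-H\"older improvement of the linear Muckenhoupt classes. This requires a reverse-H\"older-type inequality adapted to the simultaneous supremum defining $A_{\vec p}$, and it is the only non-routine input beyond standard Orlicz and Muckenhoupt calculus. Once that improvement is in hand, the passage through Steps 1 and 3 is essentially a bookkeeping of exponents.
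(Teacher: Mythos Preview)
Your argument is correct. Note, however, that the paper does not actually prove this lemma: it simply cites \cite{Lerner} (Lerner--Ombrosi--P\'erez--Torres--Trujillo-Gonz\'alez). So there is no ``paper's own proof'' to compare against; you are supplying a proof where the authors only quote a reference.

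Your route---pointwise domination $\mathcal{M}_{L(\log L)}(\vec f)\le C_r\,\mathcal{M}(|f_1|^r,|f_2|^r)^{1/r}$ via $\Phi(t)\le C_r(t+t^r)$, followed by openness of $A_{\vec p}$ and the weighted bound for $\mathcal{M}$---is valid and efficient. Two remarks. First, the ``hard part'' you flag, the openness of $A_{\vec p}$, is precisely Lemma~6.1 of \cite{Lerner}; the present paper invokes it explicitly in the proof of Theorem~\ref{T2} (``By [22, Lemma 6.1], we know that for every $\vec{w}\in A_{\vec{p}}$ there exists $1<r_0<\min\{p_1,p_2\}$ such that $\vec{w}\in A_{\vec{p}/r_0}$''). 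So within the logical framework of this paper you may quote it freely, and your proof then reduces to a few lines. Second, the weighted boundedness of $\mathcal{M}$ that you need is exactly Theorem~3.7 of \cite{Lerner}, also used elsewhere in the paper. In short, both ingredients you need are already black boxes in this manuscript, so your reconstruction is fully consistent with the paper's internal citations.
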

Lemma \ref{lemma1}  is  helpful in the proof of Theorem \ref{T2}.  Besides this maximal operator, we need several other maximal operators in the following.

 In \cite{Lerner}, a  maximal function $\mathcal{M}(\vec{f})$ was introduced, and its definition is
\[ \mathcal{M}(\vec{f} )(x)= \sup_{Q\ni x} \prod^2_{j=1}\left(\frac{1}{|Q|}\int_Q|f_j(y_j)| {\rm d}y_j\right),\]
where  the supremum is taken over all cubes $Q$ containing $x$. The boundedness of $\mathcal{M}(\vec{f})$ on weighted spaces was considered in [22, Theorem 3.3].

  Furthermore, Grafakos, Liu, and Yang \cite{Grafakos3} introduced  some new  multilinear  maximal operators:
\begin{eqnarray*}
&&\mathcal{M}_{2,1} (\vec{f } )(x)=\sup_{Q\ni x} \sum^{\infty}_{k=0} 2^{-kn}\left( \frac{1}{|Q|} \int_Q |f_1(y_1)|{\rm d}y_1\right) \left( \frac{1}{|2^{k}Q|} \int_{2^kQ} |f_2(y_2)|{\rm d}y_2\right) ,\\
&&\mathcal{M}_{2,2} (\vec{f } )(x)=\sup_{Q\ni x} \sum^{\infty}_{k=0} 2^{-kn}\left( \frac{1}{|Q|} \int_Q |f_2(y_2)|{\rm d}y_2\right) \left( \frac{1}{|2^{k}Q|} \int_{2^kQ} |f_1(y_1)|{\rm d}y_1\right),
\end{eqnarray*}
where $\vec{f}=(f_1,f_2)$ and each $f_j$ ($j\in\{1,2\}$) is a  locally integrable function.
 The following boundedness of $\mathcal{M}_{2,1}$ and $\mathcal{M}_{2,2}$ were proved in \cite{Grafakos3}.
\begin{lemma}\label{lemma2}
Let $1<p_1,p_2<\infty$, $\frac{1}{p}=\sum^2_{j=1} \frac{1}{p_j}$,  and $\vec{w}=(w_1,w_2)\in A_{\vec{p}}(\mathbf{R}^{2n})$. Then
  $\mathcal{M}_{2,1}$ and $\mathcal{M}_{2,2}$ are bounded from $L^{p_1}(w_1) \times L^{p_2}(w_2)$ to $L^{p}(\nu_{\vec{w}})$.
\end{lemma}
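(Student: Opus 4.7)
The plan is to exploit a pointwise comparison with linear Hardy--Littlewood maximal functions and then close the weighted estimate via the bilinear $A_{\vec{p}}$ theory. A first observation is that for any cube $Q \ni x$, the factor $\frac{1}{|2^{k}Q|}\int_{2^{k}Q}|f_{2}|$ is dominated by $Mf_{2}(x)$, so summing the geometric series $\sum_{k\geq 0}2^{-kn}$ and taking the supremum in $Q$ separately for the $f_{1}$-factor gives the pointwise bound $\mathcal{M}_{2,1}(\vec{f})(x) \leq C\,Mf_{1}(x)\,Mf_{2}(x)$, with a symmetric bound for $\mathcal{M}_{2,2}$.

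This pointwise estimate alone is insufficient, however, because $\vec{w}\in A_{\vec{p}}$ does not imply $w_{j}\in A_{p_{j}}$ for each $j$, which is what H\"older together with the linear weighted theory for $M$ would demand. To repair this, I would decompose
\[
\mathcal{M}_{2,1}(\vec{f})(x) \leq \sum_{k\geq 0} 2^{-kn}\,\mathcal{N}_{k}(\vec{f})(x), \qquad \mathcal{N}_{k}(\vec{f})(x) = \sup_{Q\ni x}\frac{1}{|Q|}\int_{Q}|f_{1}|\cdot\frac{1}{|2^{k}Q|}\int_{2^{k}Q}|f_{2}|,
\]
so that $\mathcal{N}_{0} = \mathcal{M}$ is the standard bilinear maximal operator, already known from \cite{Lerner} to be bounded on weighted products under $A_{\vec{p}}$. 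For $k \geq 1$, I would aim to show that $\mathcal{N}_{k}$ maps $L^{p_{1}}(w_{1})\times L^{p_{2}}(w_{2})\to L^{p}(\nu_{\vec{w}})$ with operator norm $\leq C\,2^{(1-\varepsilon)kn}$ for some $\varepsilon > 0$ depending only on $[\vec{w}]_{A_{\vec{p}}}$; summing the resulting geometric series $\sum_{k} 2^{-\varepsilon k n}$ then closes the estimate.

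The main obstacle is the quantitative gain $\varepsilon > 0$ for $\mathcal{N}_{k}$: the crude bound $\mathcal{N}_{k}(\vec{f}) \leq 2^{kn}\,\mathcal{M}(\vec{f})$ coming from the nesting $Q \subset 2^{k}Q$ is essentially tight and gives only $\varepsilon = 0$, which is insufficient. The required improvement should come from the self-improving/reverse-H\"older property of $A_{\vec{p}}$ weights: since $\vec{w}\in A_{\vec{p}}$, there exists $\sigma > 1$, depending only on $[\vec{w}]_{A_{\vec{p}}}$, with $\vec{w}\in A_{\vec{p}/\sigma}$, providing extra integrability that can be traded for spatial decay across the scale gap $2^{k}$. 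By symmetry, the same argument handles $\mathcal{M}_{2,2}$ with the roles of $f_{1}$ and $f_{2}$ interchanged.
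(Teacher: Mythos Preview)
The paper does not supply its own proof of this lemma; it is quoted directly from Grafakos--Liu--Yang \cite{Grafakos3}, so there is no in-paper argument against which to compare.

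Your strategy is viable and can be completed. The step you correctly isolate as the crux---extracting a gain $\varepsilon>0$ for $\mathcal{N}_{k}$ from the openness property $\vec{w}\in A_{\vec{p}/\sigma}$---goes as follows. Choose $\sigma>1$ with $\vec{w}\in A_{\vec{p}/\sigma}$ (this is \cite[Lemma~6.1]{Lerner}, also invoked later in the paper). H\"older's inequality and the inclusion $Q\subset 2^{k}Q$ give
\[
\frac{1}{|Q|}\int_{Q}|f_{1}|\;\le\;\Bigl(\frac{1}{|Q|}\int_{Q}|f_{1}|^{\sigma}\Bigr)^{1/\sigma}\;\le\;2^{kn/\sigma}\Bigl(\frac{1}{|2^{k}Q|}\int_{2^{k}Q}|f_{1}|^{\sigma}\Bigr)^{1/\sigma},
\]
so that $\mathcal{N}_{k}(\vec{f})(x)\le 2^{kn/\sigma}\bigl[\mathcal{M}(|f_{1}|^{\sigma},|f_{2}|^{\sigma})(x)\bigr]^{1/\sigma}$. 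Summing $\sum_{k\ge 0}2^{-kn(1-1/\sigma)}<\infty$ produces the pointwise bound
\[
\mathcal{M}_{2,1}(\vec{f})(x)\le C_{\sigma}\bigl[\mathcal{M}(|f_{1}|^{\sigma},|f_{2}|^{\sigma})(x)\bigr]^{1/\sigma},
\]
and the weighted bound for $\mathcal{M}$ on $L^{p_{1}/\sigma}(w_{1})\times L^{p_{2}/\sigma}(w_{2})\to L^{p/\sigma}(\nu_{\vec{w}})$ under $A_{\vec{p}/\sigma}$ (from \cite{Lerner}) then closes the estimate; this realizes $\varepsilon=1-1/\sigma$. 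Your proposal stops just short of writing this down, but the plan you lay out is the right one and the missing computation is routine once the H\"older-then-enlarge trick is identified.
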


In addition, Hu \cite{Hu1} introduced another kind of  bilinear  maximal operators $\mathcal{M}^1_{\beta}$ and $\mathcal{M}^2_{\beta}$ which was defined by
\begin{eqnarray*}
&&\mathcal{M}^1_{\beta} (\vec{f})(x)=\sup_{Q\ni x} \frac{1}{|Q|} \int_Q |f_1(y_1)|{\rm d}y_1\sum^{\infty}_{k=1} 2^{-kn}2^{k\beta}\frac{1}{|2^kQ|} \int_{2^kQ} |f_2(y_2)|{\rm d}y_2 ,\\
&&\mathcal{M}^2_{\beta} (\vec{f})(x)=\sup_{Q\ni x} \frac{1}{|Q|} \int_Q |f_2(y_2)|{\rm d}y_2\sum^{\infty}_{k=1} 2^{-kn}2^{k\beta}\frac{1}{|2^kQ|} \int_{2^kQ} |f_1(y_1)|{\rm d}y_1,
\end{eqnarray*}
where $\beta\in\mathbf{R}$ and the supremum is taken over all cubes $Q$ containing $x$. As it is well known,  a weight $w\in A_{\infty}(\mathbf{R}^n)$  implies that there exists  a  \,$\theta\in(0,1)$ such that for all cubes $Q$ and any set  $E\subset Q$,
\begin{equation}
\frac{w(E)}{w(Q)} \le C\left(\frac{|E|}{|Q|}\right)^{\theta}.
\end{equation}
For a fixed $\theta\in(0,1)$, set
\[R_{\theta}=\{w\in A_{\infty}(\mathbf{R}^n):w ~{\rm satisfies}~ (2.4)\}.\]
 In \cite{Hu1}, the following boundedness of $\mathcal{M}^1_{\beta}$ and  $\mathcal{M}^2_{\beta}$ were proved.
\begin{lemma}\label{lemma3}
Let $1<p_1,p_2<\infty$, $\frac{1}{p}=\sum^2_{j=1} \frac{1}{p_j}$, $\vec{w}=(w_1,w_2)\in A_{\vec{p}}(\mathbf{R}^{2n})$ and $\nu_{\vec{w}}\in R_{\theta}$ for some $\theta$ such that $\beta<n\theta\min\{1/p_1,1/p_2\}$. Then $\mathcal{M}^1_{\beta}$ and  $\mathcal{M}^2_{\beta}$ are bounded from $L^{p_1}(w_1) \times L^{p_2}(w_2)$ to $L^{p}(\nu_{\vec{w}})$.
\end{lemma}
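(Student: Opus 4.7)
By the symmetry between $\mathcal{M}^1_\beta$ and $\mathcal{M}^2_\beta$, it suffices to prove the estimate for $\mathcal{M}^1_\beta$. The plan is: (i) reduce to a dyadic analogue via the standard shifted-dyadic-grid trick; (ii) linearize the supremum by choosing for each $x$ a dyadic cube $Q(x)\ni x$ nearly attaining it, producing pairwise disjoint level sets $E(Q)=\{x:Q(x)=Q\}\subset Q$; and (iii) dualize against a test function $g\in L^{p'}(\nu_{\vec{w}})$. (Here I take $p>1$ first; the range $p\le 1$ is then recovered by a sparse Calder\'on--Zygmund decomposition or by multilinear extrapolation.) After linearization and duality with $\|g\|_{L^{p'}(\nu_{\vec{w}})}\le 1$, the desired $L^p(\nu_{\vec{w}})$-bound reduces to estimating
\[
\sum_{k\ge 1}2^{-k(n-\beta)}\sum_{Q}\frac{1}{|Q|}\int_Q|f_1|\cdot\frac{1}{|2^kQ|}\int_{2^kQ}|f_2|\cdot\int_{E(Q)}g\,\nu_{\vec{w}}
\]
by $C\,\|f_1\|_{L^{p_1}(w_1)}\|f_2\|_{L^{p_2}(w_2)}$.

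The core is a sharp per-cube/per-scale estimate. Applying H\"older's inequality inside each of the three integrals with the companion weights $\sigma_j=w_j^{1-p_j'}$ yields a constant
\[
\frac{\sigma_1(Q)^{1/p_1'}\,\sigma_2(2^kQ)^{1/p_2'}\,\nu_{\vec{w}}(E(Q))^{1/p}}{|Q|\,|2^kQ|}
\]
in front of $\|f_1\chi_Q\|_{L^{p_1}(w_1)}\,\|f_2\chi_{2^kQ}\|_{L^{p_2}(w_2)}\,\|g\chi_{E(Q)}\|_{L^{p'}(\nu_{\vec{w}})}$. I would then eliminate $\sigma_2(2^kQ)^{1/p_2'}$ via the $A_{\vec{p}}$ inequality applied on the cube $2^kQ$, reducing the constant to $C\cdot 2^{kn}\bigl(\sigma_1(Q)/\sigma_1(2^kQ)\bigr)^{1/p_1'}\bigl(\nu_{\vec{w}}(E(Q))/\nu_{\vec{w}}(2^kQ)\bigr)^{1/p}$. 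The essential inputs are now the reverse-doubling estimate $\nu_{\vec{w}}(Q)/\nu_{\vec{w}}(2^kQ)\le C\,2^{-kn\theta}$ supplied by $\nu_{\vec{w}}\in R_\theta$, together with an analogous $A_\infty$ reverse-doubling for $\sigma_1$ with some $\theta_1>0$; these yield a net factor $C\cdot 2^{kn(1-\theta_1/p_1'-\theta/p)}$, which after multiplication by $2^{-k(n-\beta)}$ gives a geometric series in $k$ that converges under the hypothesis $\beta<n\theta\min\{1/p_1,1/p_2\}$ (the minimum appearing in order to treat $\mathcal{M}^1_\beta$ and $\mathcal{M}^2_\beta$ symmetrically, as the two operators impose bounds of the form $\beta<n\theta/p_2$ and $\beta<n\theta/p_1$ respectively).

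The summation over $Q$ is completed using the disjointness of $\{E(Q)\}$: $\sum_Q\|g\chi_{E(Q)}\|^{p'}_{L^{p'}(\nu_{\vec{w}})}\le\|g\|^{p'}_{L^{p'}(\nu_{\vec{w}})}\le 1$. The $f_j$-factors are reabsorbed by a sparse/Carleson-type argument, using the pointwise bounds $\frac{1}{|Q|}\int_Q|f_1|\le Mf_1(x)$ and $\frac{1}{|2^kQ|}\int_{2^kQ}|f_2|\le Mf_2(x)$ for $x\in E(Q)\subset Q\subset 2^kQ$, so that integration of the bilinear averages over the disjoint sets $E(Q)$ recovers global integrals controlled by the Lerner--Ombrosi--P\'erez--Torres--Trujillo-Gonz\'alez multilinear maximal operator $\mathcal{M}(\vec{f})$, which by Lemma \ref{lemma1}-type results is bounded from $L^{p_1}(w_1)\times L^{p_2}(w_2)$ to $L^p(\nu_{\vec{w}})$ under the $A_{\vec{p}}$ hypothesis.

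The chief obstacle is the tight exponent bookkeeping in the per-cube step: the reverse-doubling gains from $\nu_{\vec{w}}\in R_\theta$ and from $\sigma_1\in A_\infty$ must be distributed optimally between $1/p_1'$ and $1/p$ so that the geometric series in $k$ just barely converges, and the hypothesis $\beta<n\theta\min\{1/p_1,1/p_2\}$ is essentially the sharp threshold for this approach.
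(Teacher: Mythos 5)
The paper does not actually prove this lemma: it is quoted verbatim from Hu's preprint \cite{Hu1}, so there is no in-paper argument to compare yours against. Judged on its own terms, your sketch has the right skeleton for the first half --- linearization of the supremum, duality for $p>1$, H\"older with the companion weights $\sigma_j=w_j^{1-p_j'}$, elimination of $\sigma_2(2^kQ)^{1/p_2'}$ via the $A_{\vec p}$ inequality on the dilated cube (the exponent identity $1/p+1/p_1'+1/p_2'=2$ makes your displayed per-cube constant correct), and reverse doubling of $\nu_{\vec w}$ from the $R_\theta$ hypothesis to beat the factor $2^{k\beta}$.

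The second half, where the sum over $Q$ must be closed, contains a genuine gap, and the mechanism you propose is wrong. The pointwise bound $\frac{1}{|Q|}\int_Q|f_1|\cdot\frac{1}{|2^kQ|}\int_{2^kQ}|f_2|\le Mf_1(x)\,Mf_2(x)$ for $x\in E(Q)$ is true but unusable: the product $Mf_1\cdot Mf_2$ dominates $\mathcal{M}(\vec f)$ pointwise, not the other way around, and it is \emph{not} bounded from $L^{p_1}(w_1)\times L^{p_2}(w_2)$ to $L^p(\nu_{\vec w})$ under $\vec w\in A_{\vec p}$ alone (that would force each $w_j\in A_{p_j}$, a strictly smaller class by \cite{Lerner}). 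Worse, if this pointwise domination sufficed, the entire $R_\theta$/per-cube apparatus would be superfluous and the lemma would hold for every $\beta<n$, which contradicts the role of the hypothesis $\beta<n\theta\min\{1/p_1,1/p_2\}$; the two halves of your write-up cannot both be the proof. If instead one stays with the per-cube H\"older route, the sums $\sum_Q\|f_1\chi_Q\|_{L^{p_1}(w_1)}^{p_1}$ and especially $\sum_Q\|f_2\chi_{2^kQ}\|_{L^{p_2}(w_2)}^{p_2}$ are not controlled: only the level sets $E(Q)$ are disjoint, the selected cubes (and a fortiori their $2^k$-dilates, whose overlap grows like $2^{kn}$) may be heavily nested, and no stopping-time selection or Carleson embedding with respect to $\sigma_1,\sigma_2$ is actually carried out. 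Finally, the case $p\le 1$ --- which does arise here, since Theorem \ref{T2} allows $p\in(0,\infty)$ --- is only asserted to follow by ``sparse decomposition or extrapolation''; duality is unavailable there and neither device is set up. In short, the exponent bookkeeping is fine, but the cube-summation step, which is the heart of the proof, is missing.
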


\section{Proof of Theorem 2}
   The proof of  Theorem \ref{T2} will  depend on some pointwise estimates using sharp maximal functions. The pointwise estimates are the following:
\begin{lemma}\label{lemma4}
Let  $T$ be a bilinear  operator satisfying condition (1.6) and its kernel  $K$  satisfy (1.2), (1.7), if $0<\delta < \frac{1}{2}$. Then for all $\vec{f}$ in any product of $\L^{p_j}(\mathbf{R}^n)$ spaces with $1\le p_j < \infty$
\begin{eqnarray*}
M^{\#}_{\delta}(T(\vec{f}))(x) \le C \mathcal{M}(\vec{f})(x) + C \sum^2_{i=1}\mathcal{M}_{2,i}(\vec{f})(x).
\end{eqnarray*}
\end{lemma}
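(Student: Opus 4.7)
The plan is to follow the classical sharp-maximal approach of Lerner et al.\ \cite{Lerner}, modified so that the weaker smoothness hypothesis (1.7) suffices in place of (1.4). Fix $x\in\mathbf{R}^n$ and a cube $Q\ni x$ with center $x_0$, and set $Q^{*}=c_n Q$ for a dimensional constant $c_n$ large enough that $8|z-x_0|<\min_{j}|z-y_j|$ whenever $z\in Q$ and $y_1,y_2\notin Q^*$. Split each $f_j=f_j^0+f_j^\infty$ with $f_j^0=f_j\chi_{Q^*}$, so that bilinearity yields
\[
T(\vec f)=T(f_1^0,f_2^0)+T(f_1^0,f_2^\infty)+T(f_1^\infty,f_2^0)+T(f_1^\infty,f_2^\infty).
\]
Take the subtracting constant to be $c_Q=T(f_1^\infty,f_2^\infty)(x_0)$. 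Using the elementary inequalities $\bigl||a|^\delta-|b|^\delta\bigr|\le|a-b|^\delta$ and $|a+b|^\delta\le|a|^\delta+|b|^\delta$ (both valid since $\delta<1$), the sharp maximal estimate reduces to four independent oscillation bounds, one per piece above.

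For the fully local piece, Kolmogorov's inequality (available because $\delta<1/2$) combined with the weak endpoint bound (1.6) gives directly
\[
\left(\frac{1}{|Q|}\int_Q|T(f_1^0,f_2^0)(z)|^\delta dz\right)^{1/\delta}\le\frac{C}{|Q|^2}\int_{Q^*}|f_1|\int_{Q^*}|f_2|\le C\,\mathcal{M}(\vec f)(x).
\]
For a mixed piece such as $T(f_1^0,f_2^\infty)(z)$ with $z\in Q$, I would invoke the size bound (1.2), decompose $(Q^*)^c$ into dyadic annuli $2^{k+1}Q^*\setminus 2^kQ^*$ (on which $|z-y_1|+|z-y_2|\sim 2^k\ell(Q)$), and collapse the resulting geometric sum in $k$; the output matches exactly the expression defining $\mathcal{M}_{2,1}(\vec f)(x)$. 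The symmetric estimate bounds $T(f_1^\infty,f_2^0)$ by $\mathcal{M}_{2,2}(\vec f)(x)$.

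The main obstacle is the far-from-diagonal piece $T(f_1^\infty,f_2^\infty)(z)-T(f_1^\infty,f_2^\infty)(x_0)$, and it is precisely here that (1.7) must be used in place of the stronger (1.4). For $z\in Q$ and $y_1,y_2\notin Q^*$ the geometric relation $8|z-x_0|<\min_j|z-y_j|$ holds by our choice of $c_n$, so (1.7) applies with $\tau$ of order $\ell(Q)$. Decomposing the integration region $(Q^*)^c\times(Q^*)^c$ into dyadic annular products indexed by $k$ and using the gain $\tau^\gamma/(|x_0-y_1|+|x_0-y_2|)^{2n+\gamma}\le C\,2^{-k\gamma}|2^kQ|^{-2}$ produces a convergent geometric series in $2^{-k\gamma}$ whose sum is dominated by $C\,\mathcal{M}(\vec f)(x)$. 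Combining the four contributions completes the proof.
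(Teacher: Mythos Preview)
Your proposal is correct and follows exactly the approach the paper indicates: the paper omits the proof of this lemma, saying only that it ``uses some ideas of [22, Theorem~3.2]'' and is ``not hard,'' and your decomposition into local--local, mixed, and far--far pieces (handled respectively by Kolmogorov plus (1.6), the size bound (1.2) with dyadic annuli yielding $\mathcal{M}_{2,i}$, and the weak smoothness (1.7) yielding $\mathcal{M}$) is precisely that argument, mirroring also the computations the paper does write out for the analogous terms $I_2^1$--$I_2^4$ in the proof of Lemma~\ref{lemma5}.
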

The proof of this Lemma uses some ideas of [22, Theorem 3.2] and the following Lemma \ref{lemma5}. Its proof   is not hard, so we omit.

\begin{lemma}\label{lemma5}
Let  $T$ be a bilinear  operator satisfying condition (1.6) and its kernel $K$ satisfy (1.2), (1.7). If $T_b^1$, $T_b^2$ be  commutators with $b\in {\rm BMO}(\mathbf{R}^n)$. For $0< \delta <\epsilon$ with $0<\delta<1/2$ let $r>1$ and $0<\beta<n$. Then, there exists a constant $C>0$, depending on $\delta$ and $\epsilon$, such that
\begin{eqnarray*}
&&\sum^2_{i=1}M^{\#}_{\delta}(T_b^i(\vec{f} ))(x)\le C \|b\|_{\rm BMO(\mathbf{R}^n)} \Big(\mathcal{M}_{L(\log L)}(\vec{f})(x)\\
&&\quad\quad+M_{\epsilon}(T(\vec{f}))(x)+\sum^2_{i=1} \{\mathcal{M}_{\beta}^i(f_1^r,f_2^r)(x)\}^{1/r} \Big)
\end{eqnarray*}
for all $\vec{f}=(f_1,f_2)$ of  bounded functions with compact support.
\end{lemma}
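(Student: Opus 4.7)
The plan is to work one cube at a time. Fix $x\in\mathbf{R}^{n}$ and any cube $Q\ni x$ with center $x_{Q}$. Since $0<\delta<1$, the elementary bound $||a|^{\delta}-|c|^{\delta}|\le |a-c|^{\delta}$ reduces the estimate of $M_{\delta}^{\#}(T_{b}^{i}(\vec{f}))(x)$ to producing, for each $Q$, a constant $c_{Q,i}$ with
$$\bigg(\frac{1}{|Q|}\int_{Q}|T_{b}^{i}(\vec{f})(y)-c_{Q,i}|^{\delta}\,{\rm d}y\bigg)^{1/\delta}$$
dominated uniformly by the right-hand side of the lemma. I will treat $i=1$; the case $i=2$ is completely symmetric.

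The key identity is
$$T_{b}^{1}(f_{1},f_{2})(y)=(b(y)-b_{Q})\,T(f_{1},f_{2})(y)-T((b-b_{Q})f_{1},f_{2})(y).$$
Raising to the $\delta$-power and averaging on $Q$, the first summand is handled by H\"older with exponents $\epsilon/\delta$ and its conjugate combined with the John--Nirenberg inequality, producing the contribution $C\|b\|_{\mathrm{BMO}}M_{\epsilon}(T(\vec{f}))(x)$. For the second summand I decompose $f_{j}=f_{j}^{0}+f_{j}^{\infty}$ with $f_{j}^{0}=f_{j}\chi_{8Q}$ and expand bilinearly into four pieces; the constant $c_{Q,1}$ is chosen as the sum of the values at $x_{Q}$ of the three pieces containing at least one $f_{j}^{\infty}$, so only the fully local piece has to be estimated by its $L^{\delta}$-average directly. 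That local piece is controlled, via Kolmogorov's inequality applied to the weak $L^{1/2}$ hypothesis (1.6) and then the generalized H\"older inequality (2.2), by $C\|b\|_{\mathrm{BMO}}\mathcal{M}_{L(\log L)}(\vec{f})(x)$.

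The three remaining pieces are where the non-smooth kernel assumption (1.7) is used. For $y\in Q$ and $y_{j}$ outside $8Q$ the geometric hypothesis $8|y-x_{Q}|<\min_{j}|x_{Q}-y_{j}|$ is satisfied, so (1.7) applies with $\tau$ of order $\ell(Q)$ and yields the gain $\ell(Q)^{\gamma}/(\ell(Q)+|x_{Q}-y_{j}|)^{2n+\gamma}$ over the size bound (1.2). Decomposing $(8Q)^{c}$ into annular shells $2^{k+1}(8Q)\setminus 2^{k}(8Q)$ turns each non-local piece into a geometric series weighted by $2^{-k(n+\gamma)}$ times products of averages of $|f_{j}|$. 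The factor $b-b_{Q}$ appearing against one of the $f_{j}$'s on such shells is handled by H\"older with conjugate exponents $r,r'$: the $L^{r'}$-average of $b-b_{Q}$ over $2^{k+3}Q$ is bounded by $Ck\|b\|_{\mathrm{BMO}}$ via John--Nirenberg, and the $k$ is absorbed by the geometric decay. The remaining averages of $|f_{j}|^{r}$ to the $1/r$-power produce sums of exactly the shape defining $\mathcal{M}_{\beta}^{i}(f_{1}^{r},f_{2}^{r})(x)$ (the hypothesis $0<\beta<n$ leaves ample room, since $\gamma\le 1$ gives more-than-sufficient decay), and pulling the $1/r$-power outside yields the term $\{\mathcal{M}_{\beta}^{i}(f_{1}^{r},f_{2}^{r})(x)\}^{1/r}$.

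The principal obstacle is the feature that motivated (1.7): kernel regularity is available only in the first variable $x$, not in the $y_{j}$. Consequently the usual linear-commutator trick of transferring the BMO oscillation onto the $y_{j}$-variable via regularity in $y_{j}$ is unavailable; this is exactly what forces the auxiliary parameter $r>1$ and the specifically-tailored maximal operators $\mathcal{M}_{\beta}^{i}$ in place of the simpler $\mathcal{M}$ or $\mathcal{M}_{L(\log L)}$. Tuning $r$, $\beta$ and the annular bookkeeping so that the John--Nirenberg factor for $b$, the decay $2^{-k(n+\gamma)}$, and the $L^{r'}$-Jensen step align to produce a single clean sum is the technical heart of the argument.
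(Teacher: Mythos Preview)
Your overall architecture matches the paper's proof: the commutator identity, the John--Nirenberg/H\"older treatment of the first summand giving $M_{\epsilon}(T(\vec f))$, the local/far splitting of each $f_j$, Kolmogorov's inequality plus (2.2) for the fully local piece, and routing the non-local pieces into the $\mathcal{M}_\beta^i$ operators. The choice of $c_{Q,1}$ is the same.

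There is, however, one concrete error in your handling of the three non-local pieces. You assert that condition (1.7) is used for all three. But (1.7) carries the hypothesis $8|x-x'|<\min_{1\le j\le 2}|x-y_j|$, i.e.\ it requires \emph{both} $y_1$ and $y_2$ to lie outside the enlarged cube. For the two mixed pieces (one $f_j$ supported on $8Q$, the other on $(8Q)^c$) this hypothesis fails, and (1.7) is simply unavailable. The paper treats those two pieces with the \emph{size} condition (1.2) alone: since one variable is far from $Q$, each of $T(\cdots)(z)$ and $T(\cdots)(x)$ is separately bounded by an annular sum with decay $2^{-kn}$ (not $2^{-k(n+\gamma)}$). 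When the factor $(b-b_Q)$ sits on the far variable, the John--Nirenberg comparison $|b_{2^kQ^\ast}-b_{Q^\ast}|\lesssim k\|b\|_{\mathrm{BMO}}$ produces exactly the linear growth in $k$ that the weight $2^{k\beta}$ in the definition of $\mathcal{M}_\beta^i$ is designed to absorb; this is the real reason $\beta>0$ is needed, not any interaction with the $\gamma$ from (1.7). Only the ``both far'' piece $f_1^\infty,f_2^\infty$ genuinely invokes (1.7), and there the extra $2^{-k\gamma}$ decay lets the paper bound it directly by $\mathcal{M}_{L(\log L)}(\vec f)$ rather than by $\mathcal{M}_\beta^i$.

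So your sketch would go through once you replace the appeal to (1.7) on the mixed pieces by the size estimate (1.2), and adjust the bookkeeping of the decay rates accordingly.
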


\begin{proof}
We only write out the proof of $M^{\#}_{\delta}(T_b^1(\vec{f}))(x)$, the other can be obtained by symmetry. In our proof we wil use some ideas of  \cite{Perez3}. For a fixed $x\in\mathbf{R}^n$, a cube $Q$ centered at $x$ and  constants $c, \lambda$, because $0<\delta<1/2$,
\begin{eqnarray*}
&&\left(\frac{1}{|Q|}\int_Q \big||T_b^1(\vec{f})(z)|^{\delta}-|c|^{\delta} \big|{\rm d}z \right)^{1/\delta}
\le \left(\frac{1}{|Q|}\int_Q |T_b^1(\vec{f})(z)-c|^{\delta} {\rm d}z \right)^{1/\delta}  \\
&&\le\left(\frac{C}{|Q|}\int_Q |(b(z)-\lambda)T(f_1,f_2)(z)-T((b(z)-\lambda)f_1,f_2)(z)-c|^{\delta} {\rm d}z \right)^{1/\delta} \\
&& \le\left(\frac{C}{|Q|}\int_Q |(b(z)-\lambda)T(f_1,f
_2)(z)|^{\delta} {\rm d}z \right)^{1/\delta} \\
&&\quad+ \left(\frac{C}{|Q|}\int_Q |T((b(z)-\lambda)f_1,f_2)(z)-c|^{\delta} {\rm d}z \right)^{1/\delta} \\
&&~=I_1+I_2.
\end{eqnarray*}
Let $Q^{*}=8^nQ$,  $\lambda=b_{Q^{*}}$.  The proof of the first part is the same as [25, Theorem 3.1]. Therefore, we omit the proof, and from [25, Theorem 3.1], we  obtain that
\begin{eqnarray*}
&&I_1 \le C\|b\|_{{\rm BMO}(\mathbf{R}^n)} M_{\epsilon}(T(f_1,f_2))(x).
\end{eqnarray*}
Hence, we only consider the second part $I_2$. We decompose  $f_1,f_2$ as $f_1=f_1^1+f_1^2=f_1(x)\chi_{Q^{*}}+f_1(x)\chi_{\mathbf{R}^n\setminus Q^{*}}$,  $f_2=f_2^1+f_2^2=f_2(x)\chi_{Q^{*}}+f_2(x)\chi_{\mathbf{R}^n\setminus Q^{*}}$. Let $c=c_1+c_2+c_3$ and
\begin{eqnarray*}
&&c_1=T((b-\lambda)f_1^1,f_2^2)(x),  \\
&&c_2=T((b-\lambda)f_1^2,f_2^1)(x), \\
&&c_3=T((b-\lambda)f_1^2,f_2^2)(x).
\end{eqnarray*}
Therefore,
\begin{eqnarray*}
I_2&\le &\left(\frac{C}{|Q|}\int_Q |T((b-\lambda)f_1^1,f_2^1)(z)|^{\delta} {\rm d}z \right)^{1/\delta} \\
&& + \left(\frac{C}{|Q|}\int_Q |T((b-\lambda)f_1^1,f_2^2)(z)-c_1|^{\delta} {\rm d}z \right)^{1/\delta}\\
&& + \left(\frac{C}{|Q|}\int_Q |T((b-\lambda)f_1^2,f_2^1)(z)-c_2|^{\delta} {\rm d}z \right)^{1/\delta} \\
&& + \left(\frac{C}{|Q|}\int_Q |T((b-\lambda)f_1^2,f_2^2)(z)-c_3|^{\delta} {\rm d}z \right)^{1/\delta}. \\
&=& I_2^1 + I_2^2 + I_2^3 +I_2^4.
\end{eqnarray*}
We choose $1<q<1/(2\delta)$. By H\"older's inequality, Kolmogorov inequality and the fact that $T$ satisfies condition (1.6), we get

\begin{eqnarray*}
I_2^1&\le& \left(\frac{C}{|Q|}\int_Q |T((b-\lambda)f_1^1,f_2^1)(z)|^{q\delta} {\rm d}z \right)^{1/q\delta}\\
&\le&C\|T((b-\lambda)f_1^1,f_2^1)\|_{L^{1/2,\infty}(Q,\frac{dz}{|Q|})} \\
&\le&C\left(\frac{1}{|Q|}\int_Q |(b(z)-\lambda)f_1^1(z)| {\rm d}z \right)\left(\frac{1}{|Q|}\int_Q |f_2^1(z)| {\rm d}z \right)\\
&\le& C\|b\|_{\rm BMO(\mathbf{R}^n)}\|f_1\|_{L(\log L),Q}\|f_2\|_{L(\log L),Q}  \\
&\le& C\|b\|_{\rm BMO(\mathbf{R}^n)} \mathcal{M}_{L(\log L)}(f_1,f_2)(x).
\end{eqnarray*}
Next,  we  estimate $I_2^2$ by generalized Jensen's inequality,
\begin{eqnarray*}
&&|T((b-\lambda)f_1^1,f_2^2)(z)-T((b-\lambda)f_1^1,f_2^2)(x)|\\
&&\le\int_{\mathbf{R}^{2n}} \frac{C}{(|z-y_1|+|z-y_2|)^{2n}} |(b-\lambda)f_1^1(y_1)||f_2^2(y_2)| {\rm d}y_2{\rm d}y_1   \\
&&\le C \int_{Q^{*}} |(b-\lambda)f_1^1(y_1)|{\rm d}y_1 \int_{\mathbf{R}^n\setminus{Q^{*}}}\frac{1}{|z-y_2|^{2n}}|f_2^2(y_2)| {\rm d}y_2\\
&&\le C \int_{Q^{*}} |(b-\lambda)f_1^1(y_1)|{\rm d}y_1\sum_{k=1}^{\infty} \int_{2^kQ^{*}\setminus{2^{k-1}Q^{*}}}\frac{1}{|z-y_2|^{2n}}|f_2^2(y_2)| {\rm d}y_2\\
&&\le C \|b\|_{{\rm BMO}(\mathbf{R}^n)}\|f_1\|_{L(\log L),Q^{*}}\sum^{\infty}_{k=1} 2^{-kn}\Bigg(\frac{1}{|2^kQ^{*}|}  \int_{2^kQ^{*}} |f_2^2(y_2)| {\rm d}y_2 \Bigg) \\
&&\le C \|b\|_{{\rm BMO}(\mathbf{R}^n)}\|f_1\|_{L(\log L),Q^{*}}\sum^{\infty}_{k=1} 2^{-kn}\|f_2\|_{L(\log L),2^kQ^{*}} \\
&&\le C \|b\|_{{\rm BMO}(\mathbf{R}^n)}\bigg( \frac{1}{|Q^{*}|}  \int_{Q^{*}} |f_1(y_1)|^r {\rm d}y_1\bigg)^{\frac{1}{r}}\sum^{\infty}_{k=1} 2^{-kn}
\Bigg(\frac{1}{|2^kQ^{*}|}  \int_{2^kQ^{*}} |f_2(y_2)|^r {\rm d}y_2 \Bigg)^{\frac{1}{r}} \\
&&\le C \|b\|_{{\rm BMO}(\mathbf{R}^n)} \{\mathcal{M}_{\beta}^1(f_1^r,f_2^r)(x)\}^{\frac{1}{r}}.
\end{eqnarray*}
Based on the above estimates, we obtain
\begin{equation*}
 I_2^2 \le C \|b\|_{{\rm BMO}(\mathbf{R}^n)}\{\mathcal{M}_{\beta}^1(f_1^r,f_2^r)(x)\}^{\frac{1}{r}}.
\end{equation*}
For $I_2^3$, we have
\begin{eqnarray*}
 &&|T((b-\lambda)f_1^2,f_2^1)(z)-T((b-\lambda)f_1^2,f_2^1)(x)|\\
&&\le\int_{\mathbf{R}^{2n}} \frac{C}{(|z-y_1|+|z-y_2|)^{2n}} |(b-\lambda)f_1^2(y_1)||f_2^1(y_2)| {\rm d}y_2{\rm d}y_1   \\
&&\le C \int_{Q^{*}} |f_2^1(y_2)|{\rm d}y_2\sum_{k=1}^{\infty} \int_{2^kQ^{*}\setminus{2^{k-1}Q^{*}}}\frac{|(b-\lambda)f_1^2(y_1)|}{|z-y_1|^{2n}} {\rm d}y_1\\
&&\le C\frac{1}{|Q^{*}|} \int_{Q^{*}} |f_2^1(y_2)|{\rm d}y_2\sum_{k=1}^{\infty} 2^{-kn}\frac{1}{|2^kQ^{*}|} \int_{2^kQ^{*}} |(b-\lambda)f_1^2(y_1)| {\rm d}y_1\\
&&\le C\frac{1}{|Q^{*}|} \int_{Q^{*}} |f_2^1(y_2)|{\rm d}y_2\sum_{k=1}^{\infty} 2^{-kn}\frac{1}{|2^kQ^{*}|} \int_{2^kQ^{*}} |(b-b_{2^kQ^{*}})f_1^2(y_1)| {\rm d}y_1
\end{eqnarray*}
\begin{eqnarray*}
&&\quad+C\frac{1}{|Q^{*}|} \int_{Q^{*}} |f_2^1(y_2)|{\rm d}y_2\sum_{k=1}^{\infty} 2^{-kn}\frac{1}{|2^kQ^{*}|} \int_{2^kQ^{*}} |(b_{2^kQ^{*}}-b_{Q^{*}})f_1^2(y_1)| {\rm d}y_1\\
&&\le C \|b\|_{{\rm BMO}(\mathbf{R}^n)} \|f_2\|_{L(\log L),Q^{*}}\sum^{\infty}_{k=1} 2^{-kn}\|f_1\|_{L(\log L),2^kQ^{*}}\\
&&\quad+C \|b\|_{{\rm BMO}(\mathbf{R}^n)} \|f_2\|_{L(\log L),Q^{*}}\sum^{\infty}_{k=1} 2^{-kn}k\|f_1\|_{L(\log L),2^kQ^{*}}\\
&&\le C \|b\|_{{\rm BMO}(\mathbf{R}^n)}\bigg( \frac{1}{|Q^{*}|}  \int_{Q^{*}} |f_2(y_2)|^r {\rm d}y_2\bigg)^{\frac{1}{r}}\sum^{\infty}_{k=1} 2^{-kn}
\Bigg(\frac{1}{|2^kQ^{*}|}  \int_{2^kQ^{*}} |f_1(y_1)|^r {\rm d}y_1 \Bigg)^{\frac{1}{r}} \\
&&\quad+ C \|b\|_{{\rm BMO}(\mathbf{R}^n)}\bigg( \frac{1}{|Q^{*}|}  \int_{Q^{*}} |f_2(y_2)|^r {\rm d}y_2\bigg)^{\frac{1}{r}}\sum^{\infty}_{k=1} 2^{-kn}k
\Bigg(\frac{1}{|2^kQ^{*}|}  \int_{2^kQ^{*}} |f_1(y_1)|^r {\rm d}y_1 \Bigg)^{\frac{1}{r}} \\
&&\le C \|b\|_{{\rm BMO}(\mathbf{R}^n)}\{\mathcal{M}_{\beta}^2(f_1^r,f_2^r)(x)\}^{\frac{1}{r}}.
\end{eqnarray*}
Finally, we use condition (1.7) to estimate $I_2^4$. Note that for any $x,z\in Q$ and $y_1,y_2\in\mathbf{R}^n\setminus Q^{*}$, $|x-z|\le n \ell(Q)\le \frac{1}{8}\min\{|z-y_1|,|z-y_2|\}$. So
\begin{eqnarray*}
&&|T((b-\lambda)f_1^2,f_2^2)(z)-T((b-\lambda)f_1^2,f_2^2)(x)|\\
&&\le \int_{\mathbf{R}^{2n}}|K(z,y_1,y_2)-K(x,y_1,y_2)| |(b-\lambda)f_1^2(y_1)||f_2^2(y_2)| {\rm d}y_2{\rm d}y_1\\
&&\le C\int_{\mathbf{R}^{n}\setminus Q^{*}} \int_{\mathbf{R}^{n}\setminus Q^{*}}\frac{\ell(Q)^{\gamma}}{(|z-y_1|+|z-y_2|)^{2n+\gamma}} |(b-\lambda)f_1^2(y_1)||f_2^2(y_2)| {\rm d}y_2{\rm d}y_1   \\
&&\le C\sum^{\infty}_{k=1}\int_{2^k\ell(Q)<|z-y_1|+|z-y_2|<2^{k+1}\ell(Q)} \frac{\ell(Q)^{\gamma}~ |(b-\lambda)f_1^2(y_1)|}{(|z-y_1|+|z-y_2|)^{2n+\gamma}} |f_2^2(y_2)| {\rm d}y_2{\rm d}y_1\\
&&\le C \sum^{\infty}_{k=1}\frac{\ell(Q)^{\gamma}}{(2^k\ell(Q))^{2n+\gamma}} \left( \int_{2^{k+2}Q^{*}} |(b-\lambda)f_1^2(y_1)|{\rm d}y_1 \right) \left(\int_{2^{k+2}Q^{*}} |f_2^2(y_2)| {\rm d}y_2 \right)
\end{eqnarray*}
\begin{eqnarray*}
&&\le C \sum^{\infty}_{k=1} 2^{-k\gamma}\left(\frac{1}{|2^{k+2}Q^{*}|} \int_{2^{k+2}Q^{*}} |(b-\lambda)f_1^2(y_1)|{\rm d}y_1 \right)\left(\frac{1}{|2^{k+2}Q^{*}|} \int_{2^{k+2}Q^{*}} |f_2^2(y_2)|{\rm d}y_2 \right)  \\
&&\le C \|b\|_{{\rm BMO}(\mathbf{R}^n)} \mathcal{M}_{L(\log L)}(\vec{f})(x).
\end{eqnarray*}
According to the above estimate, we know that
\begin{equation*}
 I_2^4 \le C \|b\|_{{\rm BMO}(\mathbf{R}^n)} \mathcal{M}_{L(\log L)}(\vec{f})(x).
\end{equation*}
The proof is completed.
\end{proof}

Now, we are  ready to prove Theorem \ref{T2}.
\begin{proof}
We only write out the proof of the boundedness of $T_b^1$, and the other can be got in the same method. By [22, Lemma 6.1], we know that for every $\vec{w}\in A_{\vec{p}}(\mathbf{R}^n)$, there exists a finite constant $1<r_0<\min\{p_1,p_2\}$ such that $\vec{w}\in A_{\vec{p}/r_0}(\mathbf{R}^n)$. From Lemma \ref{lemma3},  for $\vec{w}\in A_{\vec{p}/r_0}(\mathbf{R}^n)$, there exists a $\beta_0>0$ satisfies  that $\sum^2_{i=1}\mathcal{M}_{\beta_0}^i(f_1^{r_0},f_2^{r_0})(x)$ is bounded from
$L^{p_1/r_0}(w_1)(\mathbf{R}^n)\times L^{p_2/r_0}(w_2)(\mathbf{R}^n)$ to $L^{p/r_0}(\nu_{\vec{w}})(\mathbf{R}^n)$.  Hence,
\begin{eqnarray*}
&&\sum^2_{i=1}\|\{\mathcal{M}_{\beta_0}^i(f_1^{r_0},f_2^{r_0})(x)\}^{\frac{1}{r_0}}\|_{L^{p}(\nu_{\vec{w}})}\\
&&=\sum^2_{i=1}\|\{\mathcal{M}_{\beta_0}^i(f_1^{r_0},f_2^{r_0})(x)\}\|^{1/r_0}_{L^{p/r_0}(\nu_{\vec{w}})}\\
&&\le C\|f_1^{r_0}\|^{1/r_0}_{L^{p_1/r_0}(w_1)} \|f_2^{r_0}\|^{1/r_0}_{L^{p_2/r_0}(w_2)}\\
&&=C \|f_1\|_{L^{p_1}(w_1)} \|f_2\|_{L^{p_2}(w_2)}
\end{eqnarray*}
 Because $\nu_{\vec{w}}\in A_{2p}(\mathbf{R}^n)\subset A_{\infty}(\mathbf{R}^n)$, using  inequality (1.8) and Lemma \ref{lemma5},  we obtain
\begin{eqnarray*}
&&\|T_b^1(\vec{f})\|_{L^p(\nu_{\vec{w}})}\le \|M_{\delta}(T_b^1(\vec{f}))\|_{L^p(\nu_{\vec{w}})}
\le C \|
M^{\#}_{\delta}(T_b^1(\vec{f}))\|_{L^p(\nu_{\vec{w}})}   \\
&&\le C \|b\|_{{\rm BMO}(\mathbf{R}^n)} \|\mathcal{M}_{L(\log L)}(\vec{f})(x)+M_{\epsilon}(T(\vec{f}))(x)+ \sum^2_{i=1} \{ \mathcal{M}_{\beta_0}^i(f_1^{r_0},f_2^{r_0})(x) \}^{1/r_0}\|_{L^p(\nu_{\vec{w}})}\\
&&\le C  \|b\|_{{\rm BMO}(\mathbf{R}^n)} \Big( \|\mathcal{M}_{L(\log L)}(\vec{f})(x)\|_{L^p(\nu_{\vec{w}})}+\|M_{\epsilon}(T(\vec{f}))(x)\|_{L^p(\nu_{\vec{w}})}\\
&&\quad\quad+\| \sum^2_{i=1} \{ \mathcal{M}_{\beta_0}^i(f_1^{r_0},f_2^{r_0})(x) \}^{1/r_0}\|_{L^p(\nu_{\vec{w}})} \Big).
\end{eqnarray*}
If we take $\epsilon$ small, we can use  Lemma \ref{lemma4} to obtain
\begin{eqnarray*}
 \|M_{\epsilon}^{\#}(T(\vec{f}))\|_{L^p(\nu_{\vec{w}})}&\le& C \|\mathcal{M}(\vec{f})\|_{L^p(\nu_{\vec{w}})}+C \|\mathcal{M}_1(\vec{f})\|_{L^p(\nu_{\vec{w}})}\\
  &\le& C\|\mathcal{M}_{L(\log L)}(\vec{f})\|_{L^p(\nu_{\vec{w}})}+C\|\mathcal{M}_1(\vec{f})\|_{L^p(\nu_{\vec{w}})}.
\end{eqnarray*}
Now the desired result follows from Lemma \ref{lemma1} and Lemma \ref{lemma2}.

In the above proof, we  note that when we use the inequality (1.8) we need to explain that $\| M_{\epsilon}(T(\vec{f}))\|_{L^p(\nu_{\vec{w}})}$ and  $\|M_{\delta}(T_b^1(\vec{f}))\|_{L^p(\nu_{\vec{w}})}$ are finite. A detailed proof was given in page 33 of \cite{Lerner}, and the proof  can also be applied to here owing to the boundedness of $T$ which was proved in [20, Theorem 2] . The reader can see \cite{Lerner} and \cite{Hu4}.
\end{proof}

\section{Proof of Theorem 1}
The idea of considering truncated operators to prove compactness results in the linear setting  can trace back to \cite{Krantz}, and this method was adopted in \cite{Clop}. Recently, B\'enyi  et al. (see \cite{Benyi1})  introduced a new smooth truncation to simplify the computations. We will use this technique to prove Theorem \ref{T1}.

Let $\varphi=\varphi(x,y_1,y_2)$ be a non-negative function in $C_c^{\infty}(\mathbf{R}^{3n})$, and it satisfy ${\rm supp}\,\varphi\subset\{(x,y_1,y_2):\max(|x|,|y_1|,|y_2|)<1\} $, $\int_{\mathbf{R}^{3n}} \varphi(u)du=1$.  For $\delta>0$, let $\chi^{\delta}=\chi^{\delta}(x,y_1,y_2)$ be the characteristic function of the set $\{(x,y_1,y_2):\max(|x-y_1|,|x-y_2|)\ge 3\delta/2\}$, and let
\begin{eqnarray*}
\psi^{\delta}=\varphi_{\delta}*\chi^{\delta},
\end{eqnarray*}
where $\varphi_{\delta}(x,y_1,y_2)=(\delta/4)^{-3n}\varphi(4x/\delta,4y_1/\delta,4y_2/\delta)$. By an easy  calculation, we get that $\psi^{\delta} \in C^{\infty}(\mathbf{R}^{3n})$, $\|\psi^{\delta}\|_{L^{\infty}}\le1$,
\begin{eqnarray*}
{\rm supp}\,\psi^{\delta}\subset\{(x,y_1,y_2):\max(|x-y_1|,|x-y_2|)\ge\delta\},
\end{eqnarray*}
and $\psi^{\delta}(x,y_1,y_2)=1$ if $\max(|x-y_1|,|x-y_2|)\ge 2\delta$.

We define the truncated kernel
\begin{eqnarray*}
K^{\delta}(x,y_1,y_2)=\psi^{\delta}(x,y_1,y_2)K(x,y_1,y_2),
\end{eqnarray*}
where $K(x,y_1,y_2)$ is the kernel associated to the  bilinear  singular integral  operator $T$  considered in Theorem \ref{T1}. It's easy to verify that $K^{\delta}$ also satisfies condition (1.2) and (1.7). Denote by $T^{\delta}$  the bilinear operator that associated with  kernel $K^{\delta}$ in the sense of  (1.1). The following Lemma was proved in \cite{Benyi1}:
\begin{lemma}\label{lemma6}
For all $x\in \mathbf{R}^n$, $b,b_1,b_2\in C^{\infty}_c(\mathbf{R}^n)$, if $\vec{w}=(w_1,w_2)\in A_{\vec{p}}(\mathbf{R}^{2n})$, then
\begin{eqnarray*}
&&\lim_{\delta\to0}\|[b,T^{\delta}]_1-[b,T]_1\|_{L^{p_1}(w_1)\times L^{p_2}(w_2)\to L^p(\nu_{\vec{w}})}=0, \\
&&\lim_{\delta\to0}\|[b,T^{\delta}]_2-[b,T]_2\|_{L^{p_1}(w_1)\times L^{p_2}(w_2)\to L^p(\nu_{\vec{w}})}=0, \\
&&\lim_{\delta\to0}\|[b_2,[b_1,T^{\delta}]_1]_2-[b_2,[b_1,T]_1]_2\|_{L^{p_1}(w_1)\times L^{p_2}(w_2)\to L^p(\nu_{\vec{w}})}=0.
\end{eqnarray*}
\end{lemma}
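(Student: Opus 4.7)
The plan is to reduce to $b, b_1, b_2 \in C_c^\infty(\mathbf{R}^n)$ (which is how the lemma is stated) and prove a quantitative operator-norm bound of order $O(\delta)$ (or $O(\delta^2)$ for the iterated commutator) for the difference of commutators. The structural fact to exploit is that the kernel $K - K^\delta = (1-\psi^\delta) K$ is supported in $\{\max(|x-y_1|, |x-y_2|) \le 2\delta\}$ while still obeying the size bound (1.2), and that for smooth $b$ the mean value inequality yields $|b(y_j)-b(x)| \le \|\nabla b\|_\infty |x-y_j|$. Combining these, the integrand for $([b,T]_1 - [b,T^\delta]_1)(f_1,f_2)(x)$ is absolutely dominated by
\[
C\|\nabla b\|_\infty \int\!\!\int \frac{|x-y_1|\,\chi_{\{\max(|x-y_1|,|x-y_2|)\le 2\delta\}}}{(|x-y_1|+|x-y_2|)^{2n}} |f_1(y_1)||f_2(y_2)|\,\mathrm{d}y_1 \mathrm{d}y_2.
\]

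Next I would decompose the support into dyadic annuli $A_k = \{2^{-k}\delta < |x-y_1|+|x-y_2| \le 2^{-k+1}\delta\}$ for $k \ge 0$. On $A_k$ the factor $|x-y_1|$ contributes at most $C\,2^{-k}\delta$, while the remaining integral of $(|x-y_1|+|x-y_2|)^{-2n}|f_1||f_2|$ over $A_k$ is controlled by the product of the averages of $|f_1|$ and $|f_2|$ over the ball centered at $x$ of radius $2^{-k+1}\delta$. Summing the resulting geometric series in $k$ yields the pointwise domination
\[
\bigl|([b,T]_1 - [b,T^\delta]_1)(f_1,f_2)(x)\bigr| \le C\|\nabla b\|_\infty\, \delta\, \mathcal{M}(f_1,f_2)(x),
\]
and the weighted boundedness of $\mathcal{M}$ from $L^{p_1}(w_1)\times L^{p_2}(w_2)$ to $L^p(\nu_{\vec{w}})$ under $\vec{w}\in A_{\vec{p}}$, recalled in the paragraph following Lemma \ref{lemma1}, closes the first limit. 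The second limit follows by the obvious symmetry in the two variable slots.

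For the iterated commutator $[b_2,[b_1,T-T^\delta]_1]_2$, the same dyadic analysis carries two Lipschitz factors $|x-y_1|$ and $|x-y_2|$, each contributing an additional $2^{-k}\delta$ on $A_k$, so the pointwise bound improves to $C\|\nabla b_1\|_\infty \|\nabla b_2\|_\infty \delta^2\, \mathcal{M}(f_1,f_2)(x)$, which still vanishes as $\delta \to 0$.

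The main technical subtlety will be the bookkeeping on the annuli $A_k$: one must see that the Lipschitz factor $|x-y_j|$ paired with the size bound $(|x-y_1|+|x-y_2|)^{-2n}$ really produces a genuine factor of $\delta$ rather than a logarithm. This is precisely what the commutator structure buys us, and it explains why the lemma requires $b, b_1, b_2 \in C_c^\infty$ rather than merely $\mathrm{BMO}$; the smooth truncation $\psi^\delta$ is designed exactly so that its transition region of thickness $\delta$ is exploited by this single Lipschitz gain.
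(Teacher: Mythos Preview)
Your proposal is correct and follows essentially the same approach as the paper, which does not spell out a proof but simply notes that ``by the size condition (1.2), Lemma~\ref{lemma6} can be proved by the argument used in \cite{Benyi1}.'' Your argument---combining the support property of $K-K^\delta$, the Lipschitz bound $|b(y_1)-b(x)|\le \|\nabla b\|_\infty|x-y_1|$, the size estimate (1.2), a dyadic decomposition, and the weighted boundedness of $\mathcal{M}$---is exactly that argument made explicit.
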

By the size condition (1.2), Lemma \ref{lemma6} can be proved by the argument used in \cite{Benyi1}.
\begin{lemma}\label{lemma7}
Suppose that $T$ is as in Theorem \ref{T1}. Then, for all $\zeta>0$, there  exists a positive constant $C$ such that for all $\vec{f}$ in the product of $L^{p_j}(\mathbf{R}^n)$ with $1\le p_j<\infty$ and all $x\in\mathbf{R}^n$
\begin{eqnarray*}
T^{*}(\vec{f})(x)\le C\big(M_{\zeta}(T(\vec{f})) (x)\big)+\sum^2_{i=1}\mathcal{M}_{2,i}(\vec{f})(x)+\mathcal{M}(\vec{f})(x),
\end{eqnarray*}
where  $T^{*}(\vec{f})$ is the maximal truncated bilinear singular integral operator defined as
\begin{eqnarray*}
T^{*}(f_1,f_2)=\sup_{\eta>0}\bigg|\int\int_{\max(|x-y_1|,|x-y_2|)>\eta} K(x,y_1,y_2)f_1(y_1)f_2(y_2){\rm d}y_1{\rm d}y_2\bigg|.
\end{eqnarray*}
\end{lemma}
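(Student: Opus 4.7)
The plan is to establish a Cotlar-type pointwise estimate for $T_\eta(\vec f)(x)$ that is uniform in $\eta > 0$, and then pass to the supremum. Because $M_\zeta$ is non-decreasing in $\zeta$ by H\"older, it suffices to prove the bound with a fixed exponent $\zeta_0 \in (0, 1/2)$; the case of arbitrary $\zeta > 0$ follows by monotonicity (and, if necessary, by shrinking $\zeta_0$).

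Fix $x$ and $\eta > 0$. Write $B = B(x, \eta)$ and let $B' = B(x, \eta/16)$, a radius chosen so that (1.7) is available with $\tau \sim \eta$ whenever the base point is moved from $x$ to any $z \in B'$. Split $f_j = f_j^0 + f_j^\infty$ with $f_j^0 = f_j \chi_B$. Since the truncation excludes the region $B \times B$,
\[
T_\eta(\vec f)(x) = T(f_1^\infty, f_2^\infty)(x) + T(f_1^\infty, f_2^0)(x) + T(f_1^0, f_2^\infty)(x);
\]
subtracting the analogous identity for $T(\vec f)(z)$, which carries the additional inner piece $T(f_1^0, f_2^0)(z)$, yields, for every $z \in B'$,
\begin{align*}
|T_\eta(\vec f)(x)| &\le |T(\vec f)(z)| + |T(f_1^0, f_2^0)(z)| + |T(f_1^\infty, f_2^\infty)(x) - T(f_1^\infty, f_2^\infty)(z)| \\
&\quad + \sum_{(i,j) \in \{(1,2),(2,1)\}} \bigl(|T(f_i^\infty, f_j^0)(x)| + |T(f_i^\infty, f_j^0)(z)|\bigr).
\end{align*}

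I would then estimate each error term uniformly for $z \in B'$. The $(\infty, \infty)$ difference is controlled by (1.7) and a dyadic decomposition into annuli $2^k B$, producing $C \sum_k 2^{-k\gamma} \mathcal{M}(\vec f)(x) \lesssim \mathcal{M}(\vec f)(x)$. The mixed terms $T(f_i^\infty, f_j^0)(\cdot)$ cannot invoke smoothness (one argument is concentrated in $B$), but the size condition (1.2), combined with $|z - y_i| \sim |x - y_i|$ for $y_i \notin B$ and $z \in B'$, expresses them directly as dyadic sums of the form defining $\mathcal{M}_{2, j}(\vec f)(x)$.

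The main obstacle is the inner piece $T(f_1^0, f_2^0)(z)$, which admits no pointwise control by maximal functions of $\vec f$ at $x$; this is precisely why a maximal average of $T(\vec f)$ must appear on the right-hand side. The resolution is to raise the entire inequality to the $\zeta_0$-th power and integrate in $z$ over $B'$. Kolmogorov's inequality combined with the weak endpoint (1.6) gives
\[
\left(\frac{1}{|B'|} \int_{B'} |T(f_1^0, f_2^0)(z)|^{\zeta_0} \, dz\right)^{1/\zeta_0} \le C |B'|^{-2} \|T(f_1^0, f_2^0)\|_{L^{1/2, \infty}} \le C \, \mathcal{M}(\vec f)(x),
\]
while the averaged $|T(\vec f)(z)|^{\zeta_0}$ is dominated by $M_{\zeta_0}(T(\vec f))(x)^{\zeta_0}$ since $B' \ni x$. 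Collecting, taking the $(1/\zeta_0)$-th power, and then $\sup_\eta$ yields the claim.
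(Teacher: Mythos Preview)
Your argument is correct and is precisely the Cotlar-type inequality of Grafakos--Torres \cite{Grafakos4}, Theorem~1, which is exactly what the paper defers to without giving details. The only adaptation from the smooth-kernel setting is the one you make: the $(\infty,\infty)$ difference uses condition~(1.7) (which requires $8|x-z|<\min_j|x-y_j|$, guaranteed by your choice $B'=B(x,\eta/16)$), while the mixed pieces---for which (1.7) is unavailable because one variable lies inside $B$---are bounded term by term via the size estimate~(1.2), and this is what produces the additional $\mathcal{M}_{2,i}$ contributions in the statement.
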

   The proof of the Lemma \ref{lemma7} is similar to the proof of [15, Theorem 1], so we leave it to the interested reader.
\begin{lemma}\label{lemma8}
Let $1<p<\infty$, $w\in A_p(\mathbf{R}^n)$ and $\mathcal{H}\subset L^p(w)$.  If
\begin{itemize}
\item[\rm(i)] $\mathcal{H}$ is bounded in $L^p(w)$;
\item[\rm(ii)]  $\lim\limits_{A\to\infty} \int_{|x|>A} |f(x)|^p w(x){\rm d}x=0$ uniformly for $f\in \mathcal{H}$;
\item[\rm(iii)] $\lim\limits_{t\to0} \|f(\cdot+t)-f(\cdot)\|_{L^p(w)}=0$ uniformly for $f\in \mathcal{H}$.
\end{itemize}
then   $\mathcal{H}$ is precompact in $L^p(w)$.
\end{lemma}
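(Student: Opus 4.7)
I recognize this as a weighted version of the classical Fr\'echet--Kolmogorov--Riesz compactness criterion, and the plan is to follow its standard outline: truncate using (ii), regularize by a smooth average using (iii), and extract a finite net via the Arzel\`a--Ascoli theorem applied to the regularized family, with the $A_p$ hypothesis entering through H\"older's inequality whenever a comparison with unweighted integrals is needed. Fix $\varepsilon>0$. By (ii), I first choose $A$ so large that $\|f\chi_{\{|x|>A\}}\|_{L^p(w)}<\varepsilon/4$ for every $f\in\mathcal{H}$, which reduces the problem to producing a finite $\varepsilon/2$-net for $\{f\chi_{B_A}:f\in\mathcal{H}\}$ in $L^p(w)$, since combining such a net with the tail control yields an $\varepsilon$-net for $\mathcal{H}$ itself.

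For the regularization step I introduce the averaging operator
\[ f_r(x)=\frac{1}{|B(0,r)|}\int_{B(0,r)}f(x-t)\,{\rm d}t, \]
and apply Minkowski's inequality in integral form inside $L^p(w)$ to obtain
\[ \|f-f_r\|_{L^p(w)}\le \sup_{|t|<r}\|f-f(\cdot-t)\|_{L^p(w)}. \]
Hypothesis (iii) then allows me to fix $r>0$ small enough that this quantity is below $\varepsilon/4$, uniformly on $\mathcal{H}$.

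The core step is to show that the regularized family $\{f_r|_{\bar B_{A+r}}:f\in\mathcal{H}\}$ is bounded and equicontinuous in $C(\bar B_{A+r})$. Here the $A_p$ condition is essential: it forces $\sigma=w^{1-p'}$ to be locally integrable, so H\"older's inequality gives
\[ |f_r(x)|\le \frac{1}{|B_r|}\left(\int_{B(x,r)}|f|^p w\,{\rm d}z\right)^{1/p}\left(\int_{B(x,r)}\sigma\,{\rm d}z\right)^{1/p'}, \]
which is uniformly bounded for $x\in \bar B_{A+r}$ by hypothesis (i). The same H\"older estimate applied to the symmetric difference $B(x,r)\triangle B(y,r)$, combined with the absolute continuity of $\int\sigma$ on this bounded region, yields equicontinuity. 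Arzel\`a--Ascoli then produces a finite $\eta$-net in the sup norm on $\bar B_{A+r}$, and multiplying by $w(B_{A+r})^{1/p}$ converts it into a finite net for $\{f_r\chi_{B_A}\}$ in $L^p(w)$; choosing $\eta$ so that this net is of size $\varepsilon/4$ and combining with the previous two estimates yields the required $\varepsilon$-net and hence total boundedness.

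The main obstacle, though technically routine, is bookkeeping: ensuring that all the uniform estimates above depend only on $A$, $r$, the $A_p$ constant of $w$, and the $L^p(w)$-bound of $\mathcal{H}$ coming from (i), so that the nets extracted at each scale are genuinely finite and independent of the individual $f$. The $A_p$ hypothesis is used in exactly the place where local integrability of $w$ alone would not suffice, namely to guarantee $\int_B\sigma<\infty$ on bounded sets $B$ so that the weighted H\"older estimate justifying both uniform boundedness and equicontinuity of the mollified family goes through.
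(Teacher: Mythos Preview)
Your argument is correct and follows the standard Fr\'echet--Kolmogorov--Riesz outline, with the $A_p$ hypothesis entering precisely where you indicate, through the local integrability of $\sigma=w^{1-p'}$ in the H\"older step that yields uniform boundedness and equicontinuity of the mollified family on $\bar B_{A+r}$. The bookkeeping you flag as the main obstacle is indeed routine: the bound on $|f_r(x)|$ depends only on $\|f\|_{L^p(w)}$, $|B_r|$, and $\int_{B_{A+2r}}\sigma$, and the modulus of equicontinuity depends on the same data plus the absolute continuity of $\int\sigma$ on $B_{A+2r}$, so the Arzel\`a--Ascoli net is genuinely uniform in $f\in\mathcal{H}$.

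As for comparison with the paper: the paper does not supply its own proof of this lemma but simply attributes it to Clop and Cruz \cite{Clop}. Your argument is essentially the one found there (and is the natural adaptation of the unweighted Riesz--Kolmogorov proof), so there is no substantive methodological difference to discuss.
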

This Lemma was given in \cite{Clop}.

 Now, we are ready to prove Theorem \ref{T1}.
\begin{proof}
We will work with the commutator $[b,T]_1$ first, and the proof of commutator $[b,T]_2$ can be get by symmetry. From Lemma \ref{lemma6}, we only need to  prove the compactness of  $[b,T^{\delta}]_1$ for any fixed $\delta\le1/8$.
 By Theorem \ref{T2}, it suffices to show the result for $b\in C^{\infty}_c(\mathbf{R}^n)$. Suppose $f_1,f_2$ belong to
\begin{eqnarray*}
B_1(L^{p_1}(w_1))\times B_1(L^{p_2}(w_2))=\{(f_1,f_2):\|f_1\|_{L^{p_1}(w_1)},\|f_2\|_{L^{p_2}(w_2)}\le1\},
\end{eqnarray*}
where $\vec{w}\in A_{\vec{p}}$. We need to prove that the following three conditions hold:
\begin{itemize}
\item[\rm(a)]  $[b,T^{\delta}]_1(B_1(L^{p_1}(w_1))\times B_1(L^{p_2}(w_2)))$ is bounded in $L^p(\nu_{\vec{w}})$;
\item[\rm(b)] $\lim\limits_{A\to\infty} \int_{|x|>A} |[b,T^{\delta}]_1(f_1,f_2)(x)|^p \nu_{\vec{w}}(x){\rm d}x=0$;
\item[\rm(c)] Given $0<\xi<1/8$, there exists a sufficiently small $t_0 (t_0=t_0(\xi))$ such that for all $0<|t|<t_0$, we have
\begin{eqnarray}
\|[b,T^{\delta}]_1(f_1,f_2)(\cdot)-[b,T^{\delta}]_1(f_1,f_2)(\cdot+t)\|_{L^p(\nu_{\vec{w}})} \le C\xi.
\end{eqnarray}
\end{itemize}
  It is easy to find that the condition (a) holds because of the boundedness of $[b,T^{\delta}]_1 $ in Theorem \ref{T2}. Now, we prove the condition (b) using some ideas in \cite{Hu1}. Let $R>0$ be large enough such that  ${\rm supp}\,b\subset B(0,R)$ and let $A\ge \max(2R,1)$, $l$ be a nonnegative integer. For any $|x|>A$, denote
\begin{eqnarray*}
&&V_R^0(x)=\int_{|y_2|\le|x|}\int_{|y_1|\le R} |K^{\delta}(x,y_1,y_2)|\prod^2_{j=1}|f_j(y_j)|{\rm d}y_1{\rm d}y_2,\\
&&V_R^l(x)=\int_{2^{l-1}|x|\le|y_2|\le2^l|x|}\int_{|y_1|\le R} |K^{\delta}(x,y_1,y_2)|\prod^2_{j=1}|f_j(y_j)|{\rm d}y_1{\rm d}y_2,
\end{eqnarray*}
when $l>0$. From condition (1.2), we deduce that
\begin{eqnarray*}
V_R^l(x)&\le&C\int_{2^{l-1}|x|\le|y_2|\le2^l|x|}\int_{|y_1|\le R}\frac{1}{ (|x-y_1|+|x-y_2|)^{2n}}|f_1(y_1)||f_2(y_2)|{\rm d}y_1{\rm d}y_2\\
&\le&C\int_{2^{l-1}|x|\le|y_2|\le2^l|x|}\int_{|y_1|\le R} \frac{|f_1(y_1)||f_2(y_2)|}{(|x|+|x-y_2|)^{2n}}{\rm d}y_1{\rm d}y_2\\
&\le&C\frac{1}{(2^{l-1}|x|)^{2n}}\int_{2^{l-1}|x|\le|y_2|\le2^l|x|}\int_{|y_1|\le R}|f_1(y_1)||f_2(y_2)|{\rm d}y_1{\rm d}y_2\\
&\le&C\frac{1}{(2^{l-1}|x|)^{2n}}\bigg(\int_{B(0,R)} w_1^{-\frac{1}{p_1-1}}(y_1)dy_1\bigg)^{1-1/p_1}\bigg(\int_{B(0,2^l|x|)} w_2^{-\frac{1}{p_2-1}}(y_2)dy_2\bigg)^{1-1/p_2}.
\end{eqnarray*}
The same estimate can be got for $V_R^0(x)$. Note that $w_1^{-\frac{1}{p_1-1}}\in A_{\infty}(\mathbf{R}^n)$, so there exists a constant  $\theta_1\in(0,1)$ such that
\begin{eqnarray*}
\int_{B(0,R)} w_1^{-\frac{1}{p_1-1}}(y_1){\rm d}y_1\le C(2^{-(j+l)}RA^{-1})^{n\theta_1}\int_{B(0,2^{l+j}A)} w_1^{-\frac{1}{p_1-1}}(y_1){\rm d}y_1.
\end{eqnarray*}
Since $p>1$, it follows that
\begin{eqnarray*}
&&\bigg(\int_{2^{j-1}A\le|x|\le2^jA} |[b,T^{\delta}]_1(f_1,f_2)(x)|^p \nu_{\vec{w}}(x){\rm d}x\bigg)^{1/p} \\
&&~\le C\sum^{\infty}_{l=0} \bigg(\int_{2^{j-1}A\le|x|\le2^jA} |V_R^l(x)|^p \nu_{\vec{w}}(x){\rm d}x\bigg)^{1/p}\\
&&~\le C\sum^{\infty}_{l=0} \bigg(\int_{2^{j-1}A\le|x|\le2^jA} \frac{1}{(2^{l-1}|x|)^{2np} } \nu_{\vec{w}}(x){\rm d}x\bigg)^{1/p} \\
&&\quad\times\bigg(\int_{B(0,R)} w_1^{-\frac{1}{p_1-1}}(y_1){\rm d}y_1\bigg)^{1-1/p_1}\bigg(\int_{B(0,2^{l+j}A)} w_2^{-\frac{1}{p_2-1}}(y_2){\rm d}y_2\bigg)^{1-1/p_2}\\
&&~\le C\sum^{\infty}_{l=0}(2^{l+j-2}A)^{-2n}(2^{-(j+l)}RA^{-1})^{n\theta_1(1-1/p_1)} \bigg(\int_{B(0,2^jA)}\nu_{\vec{w}}(x){\rm d}x\bigg)^{1/p} \\
&&\quad\times\bigg(\int_{B(0,2^{l+j}A)} w_1^{-\frac{1}{p_1-1}}(y_1){\rm d}y_1\bigg)^{1-1/p_1}\bigg(\int_{B(0,2^{l+j}A)} w_2^{-\frac{1}{p_2-1}}(y_2){\rm d}y_2\bigg)^{1-1/p_2}
\end{eqnarray*}
\begin{eqnarray*}
&&~\le C\sum^{\infty}_{l=0}(2^{l+j}A)^{-2n}(2^{-(j+l)}RA^{-1})^{n\theta_1(1-1/p_1)}(2^{j+l}A)^{2n}\\
&&~\le C\sum^{\infty}_{l=0} 2^{l(-n\theta_1(1-1/p_1))}2^{j(-n\theta_1(1-1/p_1))}(R/A)^{n\theta_1(1-1/p_1)}\\
&&~\le C2^{j(-n\theta_1(1-1/p_1))}(R/A)^{n\theta_1(1-1/p_1)}.
\end{eqnarray*}
Thus, it is easy to see,
\begin{eqnarray*}
\bigg(\int_{|x|>A} |[b,T^{\delta}]_1(f_1,f_2)(x)|^p \nu_{\vec{w}}(x){\rm d}x\bigg)^{1/p}\le C(R/A)^{n\theta_1(1-1/p_1)} \to 0.
\end{eqnarray*}
as  $A\to \infty$.

So, it suffices to  verify condition (c). To prove (4.3), we  decompose the expression inside the $L^p(\nu_{\vec{w}})$ norm as follows:
\begin{eqnarray*}
&&[b,T^{\delta}]_1(f_1,f_2)(x)-[b,T^{\delta}]_1(f_1,f_2)(x+t)      \\
&&=\int\int_{\min(|x-y_1|,|x-y_2|)>\eta} K^{\delta}(x,y_1,y_2)(b(x+t)-b(x))\prod^2_{j=1}f_j(y_j){\rm d}\vec{y}         \\
&&\quad+\int\int_{\min(|x-y_1|,|x-y_2|)>\eta} (K^{\delta}(x,y_1,y_2)-K^{\delta}(x+t,y_1,y_2))(b(y_1)-b(x+t))\prod^2_{j=1}f_j(y_j){\rm d}\vec{y}   \\
&&\quad+\int\int_{\min(|x-y_1|,|x-y_2|)<\eta} K^{\delta}(x,y_1,y_2)(b(y_1)-b(x))\prod^2_{j=1}f_j(y_j){\rm d}\vec{y}\\
&&\quad+\int\int_{\min(|x-y_1|,|x-y_2|)<\eta} K^{\delta}(x+t,y_1,y_2)(b(x+t)-b(y_1))\prod^2_{j=1}f_j(y_j){\rm d}\vec{y}\\
&&=A(x)+B(x)+C(x)+D(x),
\end{eqnarray*}
where $0<\eta<1$ and the choice of $\eta$ will be specified later.

Now we denote
\begin{eqnarray*}
&&E=\{(x,y_1,y_2): \min(|x-y_1|,|x-y_2|)>\eta\},\\
&&F=\{(x,y_1,y_2): \max(|x-y_1|,|x-y_2|)>2\delta\},\\
&&G=\{(x,y_1,y_2): \max(|x-y_1|,|x-y_2|)>\eta\},\\
&&H=\{(x,y_1,y_2): \delta<\max(|x-y_1|,|x-y_2|)<2\delta\}.
\end{eqnarray*}
It is obvious that $K^{\delta}(x,y_1,y_2)=K(x,y_1,y_2)$ on $F$. Consequently,
\begin{eqnarray*}
&&\bigg|\int\int_E K^{\delta}(x,y_1,y_2)f_1(y_1)f_2(y_2){\rm d}y_1{\rm d}y_2-\int\int_G K(x,y_1,y_2)f_1(y_1)f_2(y_2){\rm d}y_1{\rm d}y_2\bigg|\\
&&=\bigg|\int\int_{(E\cap F)\cup (E\cap H)} K^{\delta}(x,y_1,y_2)f_1(y_1)f_2(y_2){\rm d}y_1{\rm d}y_2  \\
&&\quad-\int\int_{(E\cap F)\cup (G\setminus (E\cap F))} K(x,y_1,y_2)f_1(y_1)f_2(y_2){\rm d}y_1{\rm d}y_2\bigg|
\end{eqnarray*}
\begin{eqnarray*}
&&\le\bigg|\int\int_{E\cap H} K^{\delta}(x,y_1,y_2)f_1(y_1)f_2(y_2){\rm d}y_1{\rm d}y_2\bigg|\\
&&\quad+\int\int_{G\cap E^c} |K(x,y_1,y_2)f_1(y_1)f_2(y_2)|{\rm d}y_1{\rm d}y_2\\
&&\quad+\int\int_{G\cap F^c\cap E} |K(x,y_1,y_2)f_1(y_1)f_2(y_2)|{\rm d}y_1{\rm d}y_2\\
&&\quad+\int\int_{G\cap F^c\cap E^c} |K(x,y_1,y_2)f_1(y_1)f_2(y_2)|{\rm d}y_1{\rm d}y_2.
\end{eqnarray*}
Now, we  estimate the above four parts using condition (1.2),
\begin{eqnarray*}
&&\bigg|\int\int_{E\cap H} K^{\delta}(x,y_1,y_2)f_1(y_1)f_2(y_2){\rm d}y_1{\rm d}y_2 \bigg|\\
&&\le\int\int_{H} \frac{|f_1(y_1)||f_2(y_2)|}{(|x-y_1|+|x-y_2|)^{2n}}{\rm d}y_1{\rm d}y_2 \\
&&\le C\mathcal{M}(f_1,f_2)(x).\\
&&\int\int_{G\cap E^c} |K(x,y_1,y_2)f_1(y_1)f_2(y_2)|{\rm d}y_1{\rm d}y_2 \\
&&\le\int_{|x-y_1|<\eta}\int_{|x-y_2|>\eta} \frac{|f_1(y_1)||f_2(y_2)|}{(|x-y_1|+|x-y_2|)^{2n}}{\rm d}y_1{\rm d}y_2\\
&&\le \int_{|x-y_1|<\eta}|f_1(y_1)|{\rm d}y_1\sum^{\infty}_{k=1}\int_{2^{k-1}\eta<|x-y_2|<2^k\eta}\frac{|f_2(y_2)|}{|x-y_2|^{2n}}{\rm d}y_2\\
&&\le C \sum^{\infty}_{k=1} 2^{-kn}\frac{1}{|B(x,\eta)|}\int_{B(x,\eta)}|f_1(y_1)|{\rm d}y_1\frac{1}{|B(x,2^k\eta)|}\int_{B(x,2^k\eta)}|f_2(y_2)|{\rm d}y_2\\
&&\le C\sum^2_{i=1}\mathcal{M}_{2,i}(f_1,f_2)(x),
\end{eqnarray*}
where the set $G\cap E^c$ includes two cases: $\{(x,y_1,y_2):|x-y_1|<\eta,|x-y_2|>\eta\}$ and $\{(x,y_1,y_2):|x-y_1|>\eta,|x-y_2|<\eta\}$. Since the estimates on these two regions are similar, we omit the late one. This method will be used several times in the following.

Because $\eta<|x-y_1|<2\delta$, $\eta<|x-y_2|<2\delta$ when $(x,y_1,y_2)\in G\cap F^c\cap E$. Hence,
\begin{eqnarray*}
&&\int\int_{G\cap F^c\cap E} |K(x,y_1,y_2)f_1(y_1)f_2(y_2)|{\rm d}y_1{\rm d}y_2\\
&&\le4\delta\int\int_{G}\frac{|f_1(y_1)||f_2(y_2)|}{(|x-y_1|+|x-y_2|)^{2n+1}}{\rm d}y_1{\rm d}y_2\le C \frac{\delta}{\eta}\mathcal{M}(f_1,f_2)(x),\\
&&\int\int_{G\cap F^c\cap E^c} |K(x,y_1,y_2)f_1(y_1)f_2(y_2)|{\rm d}y_1{\rm d}y_2\\
&&\le\int_{|x-y_1|<\eta}\int_{|x-y_2|>\eta}\frac{|f_1(y_1)||f_2(y_2)|}{(|x-y_1|+|x-y_2|)^{2n}}{\rm d}y_1{\rm d}y_2\le C \sum^2_{i=1}\mathcal{M}_{2,i}(f_1,f_2)(x).
\end{eqnarray*}
In summary, we  get
\begin{eqnarray*}
|A(x)|&\le& C|t| \|\nabla b\|_{L^{\infty}}  \bigg|\int\int_{E} K^{\delta}(x,y_1,y_2)f_1(y_1)f_2(y_2){\rm d}y_1{\rm d}y_2\bigg| \\
&\le& C|t| \|\nabla b\|_{L^{\infty}} \bigg( T^{*}(f_1,f_2)(x) +\frac{1}{\eta}\mathcal{M}(f_1,f_2)(x)+\sum^2_{i=1}\mathcal{M}_{2,i}(f_1,f_2)(x) \bigg).
\end{eqnarray*}
 From Lemma \ref{lemma2}, Lemma \ref{lemma7} and  [22, Theorem 3.7],  we obtain
\begin{equation}
 \|A\|_{L^p(\nu_{\vec{w}})}\le C|t|(1+1/\eta).
\end{equation}
In order to estimate $B(x)$, by a  consequence of condition (1.7), we have
\begin{eqnarray*}
|K(x,y_1,y_2)-K(x^{'},y_1,y_2)|
   \le \frac{D|x-x^{'}|^{\gamma}}{(|x-y_1|+|x-y_2|)^{2n+
  \gamma }}
\end{eqnarray*}
when $|x-x^{'}|\le\frac{1}{8}\min{|x-y_1|,|x-y_2|}$. Then
\begin{eqnarray*}
|B(x)|&\le&C\|b\|_{L^{\infty}}\int\int_E|K^{\delta}(x,y_1,y_2)-K^{\delta}(x+t,y_1,y_2)||f_1(y_1)||f_2(y_2)|{\rm d}y_1{\rm d}y_2 \\
&\le& C\|b\|_{L^{\infty}}|t|^{\gamma}\int\int_G\frac{|f_1(y_1)||f_2(y_2)|}{(|x-y_1|+|x-y_2|)^{2n+\gamma}}{\rm d}y_1{\rm d}y_2 \\
&\le&C\|b\|_{L^{\infty}}\frac{|t|^{\gamma}}{\eta^{\gamma}}\mathcal{M}(f_1,f_2)(x).
\end{eqnarray*}
Therefore,
\begin{equation}
 \|B\|_{L^p(\nu_{\vec{w}})}\le C\frac{|t|^{\gamma}}{\eta^{\gamma}}.
\end{equation}
 For any $0<\beta<1$, we have $|b(x)-b(y_1)|\le|x-y_1|^{\beta}$.
  Hence, using the size condition (1.2) and the property of the support of $K^{\delta}(x,y_1,y_2)$, we can  estimate the third term:
\begin{eqnarray*}
|C(x)|&\le& C\|\nabla b\|_{L^{\infty}}\eta\int_{{|x-y_1|<\eta}}\int_{{|x-y_2|>\eta}}\frac{|f_1(y_1)||f_2(y_2)|}{(|x-y_1|+|x-y_2|)^{2n}}{\rm d}y_1{\rm d}y_2\\
&&+ C\int_{{|x-y_1|>\eta}}\int_{{|x-y_2|<\eta}}\frac{|f_1(y_1)||f_2(y_2)|}{(|x-y_1|+|x-y_2|)^{2n-\beta}}{\rm d}y_1{\rm d}y_2\\
&\le&C \eta \mathcal{M}_{2,1}(f_1,f_2)(x)\\
&&+C\int_{|x-y_2|<\eta}|f_2(y_2)|{\rm d}y_2\sum^{\infty}_{k=1}\int_{2^{k-1}\eta<|x-y_1|<2^k\eta}\frac{|f_1(y_1)|}{|x-y_1|^{2n-\beta}}{\rm d}y_1\\
&\le&C ( \eta \mathcal{M}_{2,1}(f_1,f_2)(x)+\eta^{\beta} \mathcal{M}_{\beta}^2(f_1,f_2)(x)),
\end{eqnarray*}
provided $\eta<\delta$. From Lemma \ref{lemma3}, we know that
\begin{equation}
 \|C\|_{L^p(\nu_{\vec{w}})}\le C\eta,
\end{equation}
when we take sufficiently small $\beta$.

Finally, for the last part we proceed in a similar way,  by  replacing $x$ with $x+t$ and the region of integration $E^c$ with a larger one  $\{(x,y_1,y_2): \min(|x+t-y_1|,|x+t-y_2|)<\eta+|t|\}$.  By the fact that $x\in B(x+t,\eta+|t|)$, where $B(x+t,\eta+|t|)$ denote the ball centered at $x+t$ and with radius $\eta+|t|$, we obtain
\begin{equation}
 \|D\|_{L^p(\nu_{\vec{w}})}\le C(|t|+\eta).
\end{equation}
Let us now define $t_0=\xi^2$ and for each $0<|t|<t_0$, choose $\eta=|t|/\xi$. Then inequalities (4.2)-(4.5) imply (4.1), and in this way, we can conclude  that $[b,T]_1$ is compact.  By symmetry,  $[b,T]_2$ is also compact.
\end{proof}

\begin{acknowledgement}
  This research was supported  by the NNSF of China (Grant No. 11271330).
\end{acknowledgement}

\vspace{\baselineskip}

\end{document}